\newif\ifMariaHasTheToken
\MariaHasTheTokenfalse                               

\documentclass[12pt]{amsart}
\usepackage[utf8]{inputenc}
\usepackage{thm-restate}

\usepackage[T1]{fontenc}
\usepackage[utf8]{inputenc}

\usepackage[dvipsnames]{xcolor}
\usepackage[footskip=1cm, headheight = 16pt, top=3.7cm, bottom=3cm,  right=2.5cm,  left=2.5cm ]{geometry}
\usepackage[colorlinks,linkcolor=blue,citecolor=red]{hyperref}
\usepackage{geometry}
\usepackage{amsmath}
\usepackage{comment}
\usepackage{subcaption}
\usepackage{todonotes}
\usepackage{amssymb}
\usepackage{amsthm}
\usepackage{enumerate}
\usepackage[capitalise]{cleveref}
\usepackage{hyperref}
\usepackage{graphicx}
\usepackage[inline]{enumitem}
\usepackage{setspace}

\usepackage{tikzit}

\tikzstyle{box}=[shape=rectangle, text height=1.5ex, text depth=0.25ex, yshift=0.5mm, fill=white, draw=black, minimum height=5mm, yshift=-0.5mm, minimum width=5mm, font={\small}]
\tikzstyle{gate}=[shape=rectangle, text height=1.5ex, text depth=0.25ex, yshift=0.5mm, fill=white, draw=black, minimum height=5mm, yshift=-0.5mm, minimum width=5mm, font={\small}, tikzit category=circuit]
\tikzstyle{big gate}=[shape=rectangle, text height=1.5ex, text depth=0.25ex, yshift=0.5mm, fill=white, draw=black, minimum height=10mm, yshift=-0.5mm, minimum width=5mm, font={\small}, tikzit category=circuit]
\tikzstyle{Z dot}=[inner sep=0mm, minimum size=2mm, shape=circle, draw=black, fill={rgb,255: red,221; green,255; blue,221}, tikzit category=zx]
\tikzstyle{Z phase dot}=[minimum size=5mm, font={\footnotesize\boldmath}, shape=rectangle, rounded corners=2mm, inner sep=0.2mm, outer sep=-2mm, scale=0.8, tikzit shape=circle, draw=black, fill=white, tikzit draw=blue, tikzit category=zx]
\tikzstyle{X dot}=[Z dot, shape=circle, draw=black, fill={rgb,255: red,255; green,136; blue,136}, tikzit category=zx]
\tikzstyle{X phase dot}=[Z phase dot, tikzit shape=circle, tikzit draw=blue, fill={rgb,255: red,255; green,136; blue,136}, font={\footnotesize\boldmath}, tikzit category=zx]
\tikzstyle{hadamard}=[fill=yellow, draw=black, shape=rectangle, inner sep=0.6mm, minimum height=1.5mm, minimum width=1.5mm, tikzit category=zx]
\tikzstyle{paulibox}=[fill={rgb,255: red,221; green,221; blue,255}, draw=black, shape=rectangle, inner sep=0.6mm, minimum height=5mm, minimum width=5mm, font={\footnotesize}, text height=1.5ex, text depth=0.25ex, tikzit category=zx]
\tikzstyle{vertex}=[inner sep=0mm, minimum size=1mm, shape=circle, draw=black, fill=black, tikzit category=misc]
\tikzstyle{vertex set}=[inner sep=0mm, minimum size=2mm, shape=circle, draw=black, fill=white, font={\footnotesize\boldmath}, tikzit category=misc]
\tikzstyle{small black dot}=[fill=black, draw=black, shape=circle, inner sep=0pt, minimum width=1.2mm, tikzit category=circuit]
\tikzstyle{cnot ctrl}=[fill=black, draw=black, shape=circle, inner sep=0pt, minimum width=1.2mm, tikzit category=circuit]
\tikzstyle{cnot targ}=[fill=white, draw=white, shape=circle, tikzit category=circuit, label={center:$\oplus$}, inner sep=0pt, minimum width=2.1mm, tikzit fill={rgb,255: red,102; green,204; blue,255}, tikzit draw=black]
\tikzstyle{ket}=[fill=white, draw=black, shape=regular polygon, regular polygon sides=3, regular polygon rotate=-30, scale=0.7, inner sep=1pt, tikzit category=circuit, tikzit shape=rectangle, tikzit fill=green]
\tikzstyle{bra}=[fill=white, draw=black, shape=regular polygon, regular polygon sides=3, regular polygon rotate=30, scale=0.7, inner sep=1pt, tikzit category=circuit, tikzit shape=rectangle, tikzit fill=red]
\tikzstyle{scalar}=[shape=rectangle, text height=1.5ex, text depth=0.25ex, yshift=0.5mm, fill=white, draw=black, minimum height=5mm, yshift=-0.5mm, minimum width=5mm, font={\small}]
\tikzstyle{clabel}=[fill=white, draw=none, shape=rectangle, tikzit fill={rgb,255: red,56; green,255; blue,242}, font={\footnotesize}, inner sep=1pt, tikzit category=labels]
\tikzstyle{empty diagram}=[draw={gray!40!white}, dashed, shape=rectangle, minimum width=1cm, minimum height=1cm, tikzit category=misc]
\tikzstyle{white dot}=[Z dot, fill=none]
\tikzstyle{gray dot}=[X dot]
\tikzstyle{white phase dot}=[Z phase dot]
\tikzstyle{gray phase dot}=[X phase dot]
\tikzstyle{small hadamard}=[hadamard]
\tikzstyle{implies}=[-implies, double, double distance=2pt]
\tikzstyle{tiny dot}=[fill=black, draw=black, shape=circle, scale=0.15]

\tikzstyle{simple}=[-]
\tikzstyle{hadamard edge}=[-, dashed, dash pattern=on 2pt off 0.5pt, thick, draw={rgb,255: red,68; green,136; blue,255}]
\tikzstyle{box edge}=[-, dashed, dash pattern=on 2pt off 0.5pt, thick, draw={rgb,255: red,203; green,192; blue,225}]
\tikzstyle{brace edge}=[-, tikzit draw=blue, decorate, decoration={brace,amplitude=1mm,raise=-1mm}]
\tikzstyle{diredge}=[->]
\tikzstyle{double edge}=[-, double, shorten <=-1mm, shorten >=-1mm, double distance=2pt]
\tikzstyle{gray edge}=[-, draw=gray]
\tikzstyle{pointer edge}=[->, very thick, gray]
\tikzstyle{boldedge}=[-, line width=1.1pt, shorten <=-0.17mm, shorten >=-0.17mm]
\tikzstyle{forest edge}=[-, draw={rgb,255: red,119; green,187; blue,55}]
\tikzstyle{grey fill}=[-, fill={rgb,255: red,174; green,174; blue,174}, opacity=0.5]
\tikzstyle{sky edge}=[-, draw={rgb,255: red,76; green,156; blue,208}]
\tikzstyle{violet edge}=[-, draw={rgb,255: red,156; green,106; blue,176}]
\tikzstyle{light grey}=[-, {gray!50!white}, tikzit fill={rgb,255: red,164; green,164; blue,164}]

\input{graph.tikzdefs}

\newtheorem{theorem}{Theorem}[section]

\newtheorem{lemma}[theorem]{Lemma}

\newtheorem{observation}[theorem]{Observation}
\newtheorem{claim}[theorem]{Claim}

\newtheorem{conjecture}[theorem]{Conjecture}

\newcommand*{\claimproofs}{Proof of Claim.\quad}
\newenvironment{claimproof}[1][\claimproofs]{\begin{proof}[#1]}{\end{proof}}
\newcounter{tbox}
\newcommand{\sta}[1]{\vspace*{0.3cm}\refstepcounter{tbox}\noindent{ \parbox{\textwidth}{(\thetbox) \emph{#1}}}\vspace*{0.3cm}}

\theoremstyle{remark}

\newcommand{\defn}[1]{\textcolor[RGB]{128,0,0}{\emph{#1}}}

\newcommand{\set}[1]{\left\{#1\right\}} 
\newcommand{\sm}{\setminus}
\newcommand{\mt}{\emptyset}

\newcommand{\cT}{\mathcal{T}}
\newcommand{\cC}{\mathcal{C}}
\newcommand{\dd}{\hbox{-}}
\DeclareMathOperator{\dist}{dist}
\DeclareMathOperator{\tw}{tw}
\DeclareMathOperator{\sep}{sep}

\title{Coarse Balanced Separators in Biclique-Induced-Minor-Free Graphs}

\author{Maria Chudnovsky$^*$}
\thanks{$^*$Princeton University, Princeton, NJ, USA. Supported by NSF Grants DMS-2348219 and CCF-2505100,  AFOSR grant FA9550-25-1-0275, and a Guggenheim Fellowship.}

\author{Julien Codsi$^\dagger$}
\thanks{$^\dagger$Princeton University, Princeton, NJ, USA. Supported by NSF Grant DMS-2348219 and by the Fonds de recherche du Québec via the doctoral research scholarship 321124.}
\author{Claire Kaneshiro$^\ddagger$}
\thanks{$^\ddagger$Princeton University, Princeton, NJ, USA}

\newcommand{\class}{\mathcal{C}_t}

\begin{document}

\begin{abstract}
It is a classical theorem of Robertson and Seymour (1986) that the treewidth of a graph is linearly related to its separation number: the smallest integer $k$ such that, for every weight function on the vertices, the graph admits a balanced separator of size at most $k$. Motivated by recent progress on coarse treewidth, Abrishami, Czy\.zewska, Kluk, Pilipczuk, Pilipczuk, and Rza\.zewski (2025) conjectured the following coarse analogue: for every $r\in \mathbb{N}$ there exists an $r'\in \mathbb{N}$ such that every graph that admits balanced separators that can be covered by a bounded number of balls of bounded radius $r$  
admits a tree decomposition where every bag can be covered by a bounded number of balls of radius $r'$. 
We verify a stronger variant of this conjecture for all $r \in \mathbb{N}$ for the hereditary class of $K_{t,t}$-induced-minor-free graphs of bounded clique number.
A key step in the proof is the following result, which we expect to be of independent interest. In $K_{t,t}$-induced-minor-free graphs with clique number bounded by $s$, given a large subset of vertices $Y \subseteq V(G)$, there is a set $Z$ whose size is bounded by a function polynomial in $s$, such that no ball of radius $r$ in $G- Z$ covers a large proportion of $Y$.
\end{abstract}

\maketitle

\section{Introduction}

All graphs in this paper are finite and simple. We refer the reader to \cref{subsection notation} for notation and definitions.
\defn{Treewidth} is a graph parameter which, informally, captures “how close” a graph is to a tree, and graphs with small treewidth are necessarily sparse.
Formally, a \defn{tree decomposition} $\cT = (T, \beta)$ of a graph $G$ consists of a tree $T$ and a map $\beta:V(T) \to 2^{V(G)}$ with the following properties:
\begin{enumerate}[label=(\roman*)]
    \item For every $v \in V(G)$, there exists $t \in V(T)$ such that $v \in \beta(t)$.
    \item For every $uv \in E(G)$, there exists $t \in V(T)$ such that $u, v \in \beta(t)$.
    \item For every $v \in V(G)$, the subgraph of $T$ induced by $\{t \in V(T) \mid v \in \beta(t)\}$ is connected.
\end{enumerate}
For each $t\in V(T)$, we refer to $\beta(t)$ as a \textit{bag of} $(T, \beta)$.  The \emph{width} of a tree decomposition $(T, \beta)$, denoted by $width(T, \beta)$, is $\max_{t \in V(T)} |\beta(t)|-1$. The \emph{treewidth} of $G$, denoted by $tw(G)$, is the minimum width of a tree decomposition of $G$.  Graphs of bounded treewidth are well-understood both structurally \cite{RS86II} and algorithmically \cite{Bodlaender1988DynamicTreewidth}.

Let $w: V(G) \to \R_{\geq 0}$ be a weight function on the vertices of $G$. For a subset $C\subseteq V(G)$, we set $w(C) = \sum_{v\in C}w(v)$.
A set $X \subseteq V(G)$ is a \defn{$w$-balanced separator} if $w(C) \leq  \frac{1}{2} w(V(G))$ for every connected component $C$ of $G \setminus X$. The \defn{separation number} of $G$, denoted by $\sep(G)$, is the minimum $k\in \N$ such that, for every weight function $w: V(G)\to \R_{\geq 0}$ there is a $w$-balanced separator $X$ with size at most $k$. Given a subset $Y \subseteq V(G)$, we say that $X$ is a \defn{balanced separator for $Y$} if $X$ is a $w$-balanced separator for the weight function $w$ given by $w(v) = 1$ for $v\in Y$ and $w(v)=0$ otherwise. It is a classical result of Robertson and Seymour \cite{RS86II} that there is a linear correspondence between treewidth and separation number. 

\begin{theorem}[\cite{RS86II}] \label{thm: twboundbysep}
     For every graph $G$, $ \sep(G) -1 \leq  \tw(G)  \leq 4\sep(G).$
\end{theorem}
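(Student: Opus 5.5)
We prove the two inequalities separately. The lower bound $\sep(G)-1\le \tw(G)$ is the standard ``central bag'' argument, and the upper bound $\tw(G)\le 4\sep(G)$ is a recursive construction of a tree decomposition from balanced separators.

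For the lower bound, fix a tree decomposition $(T,\beta)$ of width $\tw(G)$ and a weight function $w$. We orient each edge $t_1t_2$ of $T$ as follows. Removing $\beta(t_1)\cap\beta(t_2)$ separates the vertices appearing only in bags on the $t_1$-side from those appearing only in bags on the $t_2$-side. At most one side can have weight more than $\frac12 w(V(G))$, and we orient the edge toward that side if it exists. Since $T$ is finite, following the orientations leads to a node $t$ all of whose incident edges point toward $t$ or are unoriented. Consider any component $C$ of $G\setminus\beta(t)$. By properties (ii) and (iii) of a tree decomposition, $C$ lies entirely within the vertices appearing only in bags of a single component of $T-t$. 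That side has weight at most $\frac12 w(V(G))$, so $\beta(t)$ is a $w$-balanced separator of size at most $\tw(G)+1$.

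For the upper bound, let $k=\sep(G)$. We prove the following stronger statement by induction on $|V(G)\setminus W|$: for every $W\subseteq V(G)$ with $|W|\le 3k$ (more generally, $W$ of any size, with target bag size $4k$), $G$ has a tree decomposition of width at most $4k$ in which some bag contains $W$. The main theorem is the case $W=\emptyset$.

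The induction step runs as follows.
\begin{enumerate}[label=(\arabic*)]
\item If $|V(G)|\le 4k+1$, a single bag suffices.
\item Otherwise, pad $W$ to size exactly $3k$ by adding arbitrary vertices, which is possible because $|V(G)|>3k$.
\item Apply the definition of $\sep(G)$ to the weight function $w=\mathbf 1_W$. This gives a set $X$ with $|X|\le k$ such that every component of $G\setminus X$ contains at most $\frac32 k$ vertices of $W$.
\item Group the components of $G\setminus X$ into the individual components $C_i$. For each $i$, the set $W_i=(W\cap C_i)\cup X$ has size at most $\frac32 k+k\le 3k$.
\item Apply induction to $G_i=G[C_i\cup X]$ with the set $W_i$. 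This is legitimate because $|V(G_i)\setminus W_i|<|V(G)\setminus W|$: the graph $G_i$ misses the vertices of $W$ lying in the other components, or, if $C_i$ is the only component, $X$ contains a vertex not in $W$ or the set $X\cup W$ grows.
\item Create a root bag $W\cup X$, which has size at most $4k$, and attach to it the bag containing $W_i$ in each decomposition of $G_i$.
\item Verify the tree decomposition properties. Every edge lies inside some $G_i$ or inside $X\cup W$, and the subtree property holds because the different $G_i$ intersect only in $X$, which belongs to the root and to the attached bags.
\end{enumerate}

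The main obstacle is making the induction measure strictly decrease. The degenerate case is when $G\setminus X$ has a single component and $X\subseteq W$. To rule this out, we use the padding in step (2). Every component then has at most $\frac32 k<3k$ vertices of $W$, so when there is a single component, some vertex of $W$ lies in $X$. This forces $|W_i|$ to carry strictly fewer vertices of $W\setminus X$, and we handle the case by instead measuring $|V(G_i)|$ together with a lexicographic tiebreak. With careful bookkeeping of the constants $3k$ and $4k$, this yields the bound $\tw(G)\le 4\sep(G)$.
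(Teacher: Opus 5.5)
The paper does not prove this theorem; it quotes it from Robertson--Seymour. Your lower bound is correct: orienting each tree edge toward the heavy side and taking a sink node gives a balanced separator bag. The skeleton of your upper bound is the standard one, with correct constants ($|W_i|\le \frac52 k$, $|W\cup X|\le 4k$): pad $W$ to $3k$, take a balanced separator $X$ for $W$, recurse on $G_i=G[C_i\cup X]$ with $W_i=(W\cap C_i)\cup X$, and use $W\cup X$ as the root bag.

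\textbf{The gap is in termination.} In step (5) you have $V(G_i)\setminus W_i=C_i\setminus W$. So $|V(G_i)\setminus W_i|=|V(G)\setminus W|$ whenever $X\subseteq W$ and every other component lies inside $W$. Vertices of $W$ in the other components are not counted by this measure, so leaving them out gains nothing.

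A concrete failure: take the path with vertices $a,x,c_1,p_1,\dots,p_m$ in order, with $m\ge 3$, so $k=1$ and the base case does not apply. Take $W=\{a,x,c_1\}$ and the valid separator $X=\{x\}$. Then $G_2$ is the path on $x,c_1,p_1,\dots,p_m$ with $W_2=\{x,c_1\}$, and both measures equal $m$.

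Your last paragraph also misidentifies the degenerate case. A single component with $X\subseteq W$ would give $G_1=G$ and $W_1=W$, an identical instance that no lexicographic tiebreak can resolve. Moreover, there $W\cap C_1=W\setminus X$, so the claim that $W_1$ carries strictly fewer vertices of $W\setminus X$ is false.

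\textbf{The missing idea is the counting that the padding is for.} Since $|W|=3k$ and $|X|\le k$, you get $|W\setminus X|\ge 2k>\frac32 k\ge |W\cap C|$ for every component $C$ of $G\setminus X$. This uses $k\ge 1$, which holds for nonempty $G$, because a weight concentrated on one vertex forces that vertex into the separator. So $W\setminus X$ meets at least two components, a single component is impossible, and every $G_i$ is a proper induced subgraph of $G$. Plain induction on $|V(G)|$ then works, and the measure $|V(G)\setminus W|$ is not needed.

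\textbf{Smaller fixes.} State explicitly that $\sep(H)\le \sep(G)$ for every induced subgraph $H$: extend a weight function on $H$ by zero and intersect the resulting separator with $V(H)$. Phrase the induction over all graphs with $\sep\le k$, since step (3) is applied to the $G_i$ and not only to $G$. Finally, drop the parenthetical about $W$ of arbitrary size with bag size $4k$, which is false.
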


This paper will investigate a ``coarse analogue'' of this theorem. 
For a set $S \subseteq V(G)$, we say that $\hat{S}$ is an \defn{$r$-cover} of $S$ if $S \subseteq N^r_G[\hat{S}]$, and  $\hat{S}$ is a \defn{minimum $r$-cover} of $S$ if there is no $r$-cover $\hat{T}$ of $S$ with $|\hat{T}| < |\hat{S}|$. The set $S$ is \defn{$(k,r)$-coverable} if there exists an $r$-cover $\hat{S}$ of $S$ with $|\hat{S}|\leq k$, and $S$
is \defn{internally $(k,r)$-coverable} if  $G[S]$ is $(k,r)$-coverable.
We say that $G$ \defn{admits $(k,r)$-balanced separators} if, for every weight function $w: V(G) \to \R_{\geq 0}$, there is a $(k,r)$-coverable $w$-balanced separator. If every induced subgraph of $G$ admits $(k,r)$-coverable balanced separators, we say that $G$ is \defn{$(k,r)$-separable}.
A class of graphs is \defn{$(k,r)$-separable} if every graph in it is $(k,r)$-separable. 
We say that a tree decomposition is \defn{(internally) $(k,r)$-coverable} if every bag is (internally) $(k,r)$-coverable. 

Abrishami, Czy\.zewska, Kluk, Pilipczuk, Pilipczuk, and Rza\.zewski proposed the following coarse counterpart to Theorem~\ref{thm: twboundbysep}.

\begin{conjecture} [Conjecture 1.1 in \cite{ACKPPR25}] \label{conj: coarse bal sep and tw}
 For every $k,r \in \N$, there exist $k',r' \in \N$ such that if $G$ admits $(k,r)$-balanced separators, then $G$ admits a $(k',r')$-coverable tree decomposition. 
\end{conjecture}

Conversely, it is straightforward to show that every graph that admits a tree decomposition where every bag is $(k,r)$-coverable also admits $(k,r)$-balanced separators using a standard argument. Therefore, if proven true, Conjecture~\ref{conj: coarse bal sep and tw} would establish that the correspondence between balanced separators and treewidth is preserved under quasi-isometry.
The following strengthening of \cref{conj: coarse bal sep and tw} is also of interest.
\begin{conjecture} \label{conj: coarse bal sep and tw constant radius}
 For every $k,r \in \N$, there exists $k' \in \N$ such that if $G$ admits $(k,r)$-balanced separators, then $G$ admits a $(k',r)$-coverable tree decomposition. 
\end{conjecture}

Abrishami et al. reduced \cref{conj: coarse bal sep and tw} to the case $r=1$ \cite{ACKPPR25}, and
 proved a weakening of Conjecture~\ref{conj: coarse bal sep and tw}, allowing logarithmic dependence on the number of vertices in either  the number of balls or the radius.
Abrishami et.al. \cite{ACKPPR25} also confirmed Conjecture~\ref{conj: coarse bal sep and tw} in graph classes with bounded doubling dimension (which is the requirement that, for every $r \in \N$, every ball of radius $2r$ can be covered by a bounded number of balls of radius $r$). 
Chudnovsky and Hickingbotham proved Conjecture~\ref{conj: coarse bal sep and tw} for hereditary $K_{t,t}$-induced-subgraph-free graph classes when $r=1$.
\begin{theorem} [\cite{CH25Coarse}] \label{thm: r=1 coarse bal sep thm}
     For all $k,t \in \N$, there exist $k'$ such that the following holds: Let $G$ be $K_{t,t}$-induced-subgraph-free and $(k,1)$-separable. Then $G$ admits an internally $(k',2)$-coverable tree decomposition.
\end{theorem}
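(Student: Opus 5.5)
The statement to plan for is \cref{thm: r=1 coarse bal sep thm}, due to Chudnovsky and Hickingbotham. My plan follows the standard Robertson--Seymour scheme behind \cref{thm: twboundbysep}. I build a tree decomposition recursively. At each step I maintain an induced subgraph $H$ of $G$ together with a boundary set $W \subseteq V(H)$, and the invariant is that $W$ is $(c,1)$-coverable in $G$ for a constant $c$ depending on $k$ and $t$.

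\textbf{Recursive step.} We apply $(k,1)$-separability to $H$ with the weight function that is uniform on $W$. This gives a $(k,1)$-coverable balanced separator $X$ for $W$ inside $H$. The root bag is then $W \cup X$. For each component $C$ of $H \setminus X$ we recurse on $H[C \cup N(C)]$, with new boundary $(W\cap C) \cup N_H(C)$.

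\textbf{The obstacle: boundary growth.} In the classical argument the boundary stays small because $|W \cap C| \le |W|/2$ and $|N(C)| \le |X|$. Coarsely this fails in two ways.
\begin{enumerate}[label=(\alph*)]
\item A set covered by $c$ balls of radius $1$ has a balanced half that still needs about $c$ balls, since halving the number of vertices does not halve the number of covering balls.
\item $N(C)$ lies within distance $2$ of the centers of $X$, not within distance $1$. This is why the conclusion gives radius $2$.
\end{enumerate}
To handle (a), I weight the centres rather than the vertices. Let $\hat W$ be a minimum $1$-cover of $W$. Give each vertex $v$ weight equal to the number of centres of $\hat W$ in $N[v]$, or alternatively choose one representative vertex of $W$ per ball. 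A balanced separator then leaves each component $C$ meeting at most half of the balls of $\hat W$. So the number of covering balls for $W\cap C$ halves, up to the balls cut by $X$, which are absorbed into $X$'s budget. With this, the recursion $c \mapsto c/2 + k + (\text{cost of } N(C))$ stabilises at a constant, provided $N(C)$ costs only boundedly many balls.

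\textbf{Controlling $N(C)$ with $K_{t,t}$-freeness.} This is the step I expect to be hardest and where the hypothesis is used. The set $N(C)\cap N[x]$, for a centre $x$ of $X$, may be large and is not $1$-coverable by few balls inside $G[\text{bag}]$. The conclusion asks for internal coverability with radius $2$, so I want each bag to contain the centres themselves. Including $x$ in the bag makes $N[x] \cap \text{bag}$ internally coverable with radius $1$, and $N(C)$ with radius $2$ through paths $x \to u \to v$ where $u \in N(x)$ and $v\in N(C)$.

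For this to be internal, the intermediate vertices $u$ must also lie in the bag. I plan to use $K_{t,t}$-induced-subgraph-freeness together with Ramsey-type arguments to show that a bounded set of such intermediate vertices suffices. The argument is as follows. If the vertices of $N(C)$ within distance $2$ of $x$ required many distinct middle vertices, then $N(x)$ and $N(C)$ would contain a large bipartite structure. Ramsey's theorem applied inside each side then yields either a large clique or a large stable set. Large stable sets on both sides with a complete connection give an induced $K_{t,t}$, which is forbidden. Large cliques are then dealt with by covering them with a single ball.

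This gives bounded $k'$. The tree-decomposition axioms are checked exactly as in the classical proof: every edge lies inside some $H[C\cup N(C)]$ or in a bag, and connectivity holds because boundaries are passed down to the children.
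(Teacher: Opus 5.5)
Your step (a) does not close the recursion, and this is where the whole difficulty of the theorem lies.

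Neither of your weightings controls the right quantity. A ball whose representative lies outside $C$ can still meet $W\cap C$. The weight $|N[v]\cap\hat W|$ counts adjacencies, not balls. You would have to weight the centres of $\hat W$ themselves. Even then, a vertex of $W\cap C$ covered by a centre outside $C$ has that centre in $N_H(C)\subseteq X=N_H[\hat X]$, and nothing bounds the number of such centres by $k$. Suppose all $c$ centres are neighbours of one vertex $x$, and each has $W$-neighbours, pairwise at distance at least $3$, in every component of $H\setminus N[x]$. Then $X=N[x]$ is perfectly balanced for the centres, yet every $W\cap C$ still needs $c$ balls of radius $1$. ``Absorbing'' these balls into the budget of $X$ means re-covering $W\cap C$ from $\hat X$, which is at distance $2$, so the radius grows by one at every level.

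Note that step (a) never uses $K_{t,t}$-freeness. If it were valid, your scheme would yield bounded-radius, boundedly coverable decompositions of every $(k,1)$-separable graph; keeping the separator and its centres inside the child graph needs no hypothesis. That would settle the open $r=1$ case of Conjecture~\ref{conj: coarse bal sep and tw}, to which Abrishami et al.\ \cite{ACKPPR25} reduce the whole conjecture.

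The missing idea is a cleaning step via a neighbourhood lemma. This is \cref{lem: TI5 thm base case} (Theorem 5.1 of \cite{treealpha5}), on which the cited proof relies. Its distance-$r$ analogue \cref{theLemma} drives the lemma of Section 3, whose proof follows Lemma 9 of \cite{CH25Coarse}.

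Before separating, one deletes a set $Z$ that is cheap to cover. In Section 3 it has bounded size; without a clique bound it has bounded independence number, hence is dominated by a bounded stable subset. After deleting $Z$, no remaining vertex sees, within the relevant distance, more than a $\frac{1}{8k}$-fraction of the boundary centres or of a minimum cover $\hat G$ of the current graph. One then separates both centre sets at once with $S=N[\hat S]$, $|\hat S|\le 2k$. The centres covering a component from outside lie in $S\cup Z$, and those in $S$ form at most a quarter of the total. This gives $c\mapsto \frac34 c+O_{k,t}(1)$, with progress measured by the minimum cover number of the current graph rather than by $|V(H)|$. That is where $K_{t,t}$-freeness is used.

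Your step (b), in contrast, targets a non-issue. Since $N_H(C)\subseteq X\subseteq N_H[\hat X]$, the set $N(C)$ is at distance $1$, not $2$, from the separator centres. Internality comes for free by letting the child graph contain all of $S\cup Z$, as $C^\star$ does in Section 3. The Ramsey argument there is also unsound. Needing many distinct middle vertices produces an induced-matching-type structure, never a complete bipartite one; a subdivided star, which is $K_{2,2}$-free, is an example.

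The radius $2$ in the statement plausibly reflects something else. Without a clique bound, only the independence numbers of $Z$ and of the boundary can be controlled, and $N[Y]$ is internally $(\alpha(Y),2)$-coverable.
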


For a positive integer $t$, we denote by $\mathcal{C}_t$ the class of $K_{t,t}$-induced-minor-free graphs.
Our results strengthen \cref{thm: r=1 coarse bal sep thm} in multiple ways at the cost of bounding the clique number.
%
%
%
%
%
%
%
%
We prove that \cref{conj: coarse bal sep and tw constant radius} holds in separable subclasses of
$\mathcal{C}_t$ with bounded clique number.

\begin{restatable}{theorem}{mainthm}
\label{thm: general coarse bal sep thm}
    For every $r,t\in \mathbb{N}$, there exists a polynomial $p=p_{r,t}$ such that the following holds.
    For every $k,s\in \mathbb{N}$, if $G \in \class$ such that $\omega(G)<s$ and $G$ is $(k,r)$-separable then $G$ admits an internally $(p(ks),r)$-coverable tree decomposition.
\end{restatable}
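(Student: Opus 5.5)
The plan follows the classical Robertson--Seymour argument behind \cref{thm: twboundbysep}, adapted to coarse separators. The central tool is the ``key step'' announced in the abstract, which I will establish first as a lemma.

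\textbf{Lemma (no concentrating ball).} For $G\in\class$ with $\omega(G)<s$ and a set $Y\subseteq V(G)$ with $|Y|$ larger than some polynomial $q_{r,t}(s)$, there is a set $Z$ with $|Z|\le \mathrm{poly}_{r,t}(s)$ such that no ball of radius $r$ in $G-Z$ contains more than, say, $|Y|/4$ vertices of $Y$.

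I would prove the lemma iteratively. While some ball $N^r_{G-Z}[v]$ contains a large fraction of $Y$, take a BFS layering from $v$ of depth at most $r$. Using $K_{t,t}$-induced-minor-freeness together with the bounded clique number, I would find a small set of vertices in the shallow layers whose removal pushes many $Y$-vertices beyond distance $r$ from $v$. The intended mechanism is a Ramsey/Gyárfás-type argument: the exclusion of $K_{t,t}$ as an induced minor and of $K_s$ forces bounded-size ``bottlenecks'' in the BFS layers. Each iteration must decrease a potential, so that the number of iterations and the total size of $Z$ stay polynomial in $s$.

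With the lemma in hand, I would build the tree decomposition by the standard recursive procedure. Maintain a pair $(H,W)$, where $H$ is an induced subgraph of $G$ and $W\subseteq V(H)$ is an ``interface'' set that must lie in the root bag. The invariant is that $W$ is internally $(c,r)$-coverable in $G$, with $c=p(ks)$.

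When $W$ becomes too large (more than $3c'$ balls' worth), I use separability of the induced subgraph $H$ with weights concentrated on $W$ (or on a cover-related set). This gives a $(k,r)$-coverable balanced separator $X$ for $W$. The root bag is $W\cup X$, and I recurse on each component $C$ of $H-X$ with interface $(W\cap C)\cup N(C)\cap X$.

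The difficulty is that balancedness is measured in vertex counts, while the invariant is measured in number of balls. Balls covering $W$ need not split nicely: a single ball may contain most of $W$, so that halving $|W|$ does not reduce the number of balls needed. This is exactly where the lemma enters. Before separating, I would apply the lemma to $Y=$ a set of representatives (one vertex of $W$ per ball of a minimum $r$-cover, chosen far apart) and add the resulting $Z$ to the bag. Then in $H-Z$, every ball of radius $r$ meets only a small fraction of the representatives. A balanced separator for the representatives therefore forces each component to need substantially fewer balls. In total, the bag gains at most $k$ balls from $X$ and $|Z|$ singleton balls.

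To keep the recursion's invariant, I must ensure that balls in the separator and the representatives are measured internally: covers should use $G[\text{bag}]$ paths. I would handle this by adding the centres and the short paths of the $k$ balls of $X$ into the bag, which costs only a constant factor $O(r)$ per ball.

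The main obstacle is the lemma, specifically obtaining a polynomial bound in $s$ and making the removal argument work for radius $r>1$. For $r=1$ this is closer to \cref{thm: r=1 coarse bal sep thm}. For general $r$, my first attempt would be induction on $r$: apply the radius-$(r-1)$ statement to the neighbourhoods of the first BFS layer, and use the $K_{t,t}$-induced-minor exclusion to contract the layers and derive a contradiction if many disjoint paths reach $Y$.

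A secondary, mostly bookkeeping, issue is verifying that the constants compose to give a polynomial $p_{r,t}(ks)$. The recursion depth is unbounded, but each bag is bounded independently of depth, as in the proof of \cref{thm: twboundbysep}.
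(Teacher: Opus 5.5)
Your architecture matches the paper's: a ``no heavy ball'' lemma, followed by a Robertson--Seymour style recursion that puts the deletion set $Z$ and a coarse separator into the root bag. However, there is a genuine gap exactly where the paper does most of its work. The lemma is not proved, and the mechanism you sketch cannot prove it.

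Killing one heavy ball is trivial: delete its centre $v$. The entire content is that only polynomially many deletions are needed, i.e.\ that after a small cleanup the set of vertices with heavy $r$-balls is itself small. Your unspecified ``decreasing potential'' just restates this. Already for $r=1$ this is the bound of \cref{lem: TI5 thm base case} (Theorem 5.1 of \cite{treealpha5}) on the independence number of the heavy set, and it does not follow from \cref{thm: r=1 coarse bal sep thm}. Likewise, many disjoint paths from a single vertex to $Y$ give no contradiction, since subdivided stars lie in $\class$. The contradiction has to come from many heavy vertices interacting through shared private $r$-neighbours.

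The paper organizes this as follows.
\begin{enumerate}[label=(\roman*)]
\item It works with $\alpha$ throughout and converts back to cardinalities via polynomial $\chi$-boundedness (\cref{lem: poly chi bound}); this is where $\omega(G)<s$ and polynomiality enter.
\item It inducts on $r$, first deleting, with a much larger constant, the vertices whose $(r-1)$-balls are heavy. After discarding a small part of $Y$, an independent set $I$ of heavy vertices then sees its $Y$-neighbourhood only at distance exactly $r$.
\item It keeps only paths of one colour type, so that their union is bipartite and hence $K_{t,t}$-subgraph-free.
\item It applies \cref{hypergraphs hitting packing} to the hypergraph of $r$-neighbourhoods. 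Here $\lambda<2t$, since otherwise a $1$-subdivision of $K_{2t}$ appears as an induced minor, and $\nu$ is bounded by counting $\alpha(Y)$. This repeatedly finds a vertex of $Y$ reached from a $1/\tau$-fraction of $I$.
\item The result is $R$ disjoint connected sets of size at most $rt$, all dominating a common $t$-set. Ramsey together with \cref{lem: disjointsubsets} then gives the contradiction.
\end{enumerate}

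Even granting a lemma, the version you state (radius $r$, fraction $1/4$, $|Z|$ polynomial in $s$ only) is too weak for your recursion. Balls of the interface cover that leak into a component $C$ of $H-X$ have centres outside $C$ but within distance $2r$, in $H-Z$, of the separator's centres. To bound how many there are, every radius-$2r$ ball of $H-Z$ must contain at most a $1/(8k)$-fraction of the centres, so that the $O(k)$ separator centres account for at most a quarter of them. If each of $k$ separator balls may meet a quarter, nothing is gained once $k\ge 4$. This is why the paper applies \cref{theLemma} with radius $2r$ and $C=8k$, and why the bound must be polynomial in $C$ to obtain $p(ks)$.

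The leaked balls must also be placed into the child wholesale: in the paper, $N^r_{G'}[\hat Y_S]\subseteq C^\star$ and $\hat Y_S\subseteq \hat Y^\star$. Otherwise $W\cap C$ need not be internally coverable by few balls. It is also cleaner to track the set of centres $\hat Y$ and demand $N^r[\hat Y]$ in the root bag than to pick far-apart representatives.

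Finally, your recursion needs a well-founded progress measure. The paper also applies the lemma and the balanced separator to a minimum $r$-cover $\hat G$ of the current graph. Each child $C^\star$ is then coverable by fewer than $|\hat G|$ balls, and is therefore a proper induced subgraph.
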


This shows that \cref{conj: coarse bal sep and tw constant radius} holds for graphs in $\class$ with bounded clique number, while also providing additional control over the dependence of $k'$ on $k$ and the clique number. The proof of \cref{thm: r=1 coarse bal sep thm} relies on a result concerning the behavior of large neighborhoods within a given set in $K_{t,t}$-induced-subgraph-free graphs (Theorem 5.1 in \cite{treealpha5}). A substantial part of the present paper is devoted to establishing a distance-$r$ generalization of this result for graphs in $\class$:

\begin{restatable}{theorem}{theLemma}\label{theLemma}
    For all $t,r\in\N$, there exists a polynomial $f=f_{t,r}$ such that the following holds. 
    Let $C,s\in \N$ such that $C\geq 2$, let $G\in \class$ with $\omega(G)<s$, and let $Y \subseteq V(G)$. Then there is a subset $X \subseteq V(G)$ with $|X| \leq f(Cs)$ such that for every $v\in G\sm X$ we have that $$|N^{r}_{G\sm X}[v]\cap Y| \leq \frac{|Y|}{C}.$$
\end{restatable}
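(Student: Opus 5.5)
We argue by induction on $r$, using the known distance-$1$ statement (Theorem 5.1 of \cite{treealpha5}, valid in $K_{t,t}$-induced-subgraph-free graphs and hence in $\class$ with $\omega<s$, since $K_{t,t}$ is then excluded as an induced subgraph by Ramsey-type bounds polynomial in $s$) as the base case. The goal is a set $X$ of size polynomial in $Cs$ after whose removal no radius-$r$ ball contains more than $|Y|/C$ vertices of $Y$. The plan is to grow $X$ greedily. While some $v\in G\sm X$ has $|N^r_{G\sm X}[v]\cap Y|>|Y|/C$, we look for a small set whose deletion destroys this heavy ball, and we argue that the process must stop after polynomially many rounds.

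\textbf{Step 1 (layering).} Fix a heavy vertex $v$ and consider the BFS layers $L_0=\{v\},L_1,\dots,L_r$ in $G\sm X$. Some layer $L_i$ contains at least $|Y|/((r+1)C)$ vertices of $Y$. Contract the balls: pick a maximal set $A\subseteq L_{i-1}$ of vertices whose radius-$(r-1)$ balls are pairwise far apart, and apply the induction hypothesis for radius $r-1$ with the constant $C'=(r+1)C\cdot g(s)$. This gives a polynomial-size set $X'$ after which every radius-$(r-1)$ ball meets at most $|Y|/C'$ of $Y$. Consequently, any surviving heavy radius-$r$ ball must be spread over many (at least about $g(s)$) disjoint radius-$(r-1)$ sub-balls hanging off distinct neighbours of $v$.

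\textbf{Step 2 (extracting a biclique induced minor).} Suppose that after deleting $X\cup X'$ there are still heavy radius-$r$ balls centred at many vertices $v_1,\dots,v_m$ whose $Y$-mass is witnessed by many different low-mass branches. Taking connected pieces (paths to $v_j$, together with the radius-$(r-1)$ branch sets) as branch sets, we try to build a $K_{t,t}$ induced minor. The centres $v_j$, each with its BFS tree truncated, form one side. Groups of branch vertices of $Y$ that are reached from $t$ different centres form the other side. Because each branch is light, a double-counting (Kővári–Sós–Turán style) argument forces $t$ centres sharing $t$ common, pairwise separable branch regions. Bounded clique number, via Ramsey with polynomial bounds in $s$ for fixed $t$, is used to make the chosen branch sets pairwise anticomplete where required. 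This is the main obstacle: ensuring inducedness, i.e.\ that unwanted edges between branch sets on the same side are absent. I would first try taking the branch sets as geodesic trees in distinct BFS layers and cleaning them with the induction hypothesis applied to smaller radii, so that adjacency between them forces a short path contradicting lightness.

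\textbf{Step 3 (counting).} From Step 2 we conclude that the number of rounds is bounded: the heavy centres can be hit by a set of size polynomial in $Cs$. Concretely, we add to $X$ the union of $X'$ and the neighbourhood-cover sets produced by the base case applied inside the first layer. Each round removes a $1/C$ fraction of heaviness in a potential argument; for instance, each $Y$-vertex lies in boundedly many chosen heavy balls after cleaning, giving at most $O(C\cdot\mathrm{poly}(s))$ rounds. Composing polynomials over the $r$ induction levels keeps $f_{t,r}$ polynomial for fixed $t,r$.
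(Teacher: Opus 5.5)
Your outer frame agrees with the paper in one respect. You induct on $r$ and first apply the radius-$(r-1)$ statement with an inflated constant, so that all $(r-1)$-balls become light. It then remains to bound the vertices whose $r$-ball is still heavy; the paper's final $X$ is the cleaning set together with either these heavy centres or $Y$ itself.

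The gap is Step~2, which carries the whole content of the theorem. You flag it yourself as ``the main obstacle'' and leave it as something you ``would first try''. Step~3 inherits the gap: the claim that each $Y$-vertex lies in boundedly many heavy balls after cleaning is essentially a restatement of the conclusion, not a consequence of anything established.

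The tools you name also cannot close the gap:
\begin{itemize}
\item A K\H{o}v\'ari--S\'os--Tur\'an count can produce $t$ centres sharing $t$ common $Y$-vertices. But the connecting paths may intersect and touch arbitrarily, so no induced minor results.
\item Bounded clique number does not make connected branch sets pairwise anticomplete. Ramsey applies to vertices, not to the contact graph of disjoint connected sets, and that contact graph can be a large clique even in triangle-free graphs: take $B_i=\{v_i\}\cup\{s_{ij}:j>i\}$ in the $1$-subdivision of $K_n$.
\item The base case you quote is stated in terms of $\alpha$, not cardinalities. Converting it already needs something like polynomial $\chi$-boundedness, which your plan never invokes.
\end{itemize}

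Here is what the paper does instead. It works with $\alpha$ throughout and uses polynomial $\chi$-boundedness of $\mathcal{C}_t$ (Bourneuf--Buci\'c--Cook--Davies) for two things: to pass from large $\alpha(Z)$ to a large independent set $I$ of heavy centres, and to colour $G\sm X$ with $\mathrm{poly}(s)$ colours.

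\begin{enumerate}
\item It discards the $Y$-vertices within distance $r-1$ of $I$. This is cheap because $(r-1)$-balls are light, and afterwards every centre still reaches many $Y$-vertices at distance exactly $r$.
\item It restricts to length-$r$ paths of a single colour sequence. Every BFS level is then monochromatic, so the union $G''$ of these paths is bipartite and hence $K_{t,t}$-subgraph-free.
\item It considers the hypergraph whose hyperedges $e_z$ are the $Y$-vertices at distance exactly $r$ from $z$. Here $\nu$ is bounded by counting. Also $\lambda<2t$, because $2t$ hyperedges with private pairwise intersections yield an induced $1$-subdivision of $K_{2t}$.
\item The Ding--Seymour--Winkler bound then gives a vertex $x$ at distance $r$ from a $1/\tau$ fraction of the centres.
\item Repeating this $R$ times yields $t$ centres $S$ and disjoint connected sets $J_1,\dots,J_R$, each of size at most $rt$ and each containing a neighbour of every vertex of $S$. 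Lightness of $(r-1)$-balls is used again here, to show that deleting the paths to $x$ costs little.
\item Ramsey on the contact graph of the $J_i$ gives one of two outcomes, each a contradiction. Either $t$ of the $J_i$ are pairwise anticomplete, giving a $K_{t,t}$ induced minor with $S$. Or $K(rt,t)$ of them pairwise touch, which by the Lozin--Razgon lemma gives a $K_{t,t}$ subgraph of the bipartite $G''$.
\end{enumerate}

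Your Step~2 does not go through without some substitute for three ingredients: the colour-type/bipartiteness trick, the hitting-set theorem with the bound on $\lambda$, and Lozin--Razgon.
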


\cref{theLemma} will be used in a forthcoming paper by the first two authors, Lokshtanov, and E~S.

\subsection{Preliminaries and notation}\label{subsection notation}
We take  standard concepts and notation not explained here from~\cite{Diestel:GT5}. 
For a positive integer $k$, we use $[k]$ to denote the set $\{1, 2, \dots, k\}$.
Given a graph~$G$, we denote its vertex set by~$V(G)$ and its edge set by~$E(G)$. 
An \defn{independent set}, or \defn{stable set}, is a subset of vertices of $V(G)$ which are pairwise non-adjacent. 
A \defn{clique} in a graph $G$ is a set of pairwise adjacent vertices. A \defn{complete bipartite graph with bipartition $(X,Y)$} is a graph $G$ with vertex set $X\cup Y$ such that $xy\in E(G)$ if and only if $x\in X$ and $y\in Y$. For a positive integer $t$, let $K_t$ denote a clique on $t$ vertices and let $K_{t,t}$ denote a complete bipartite graph where $X$ and $Y$ contain exactly $t$ vertices each. 
The \defn{independence number} of a graph $G$, denoted by $\alpha(G)$, is the maximum size of an independent set in $G$. The \defn{clique number} of $G$, denoted by $\omega(G)$, is the maximum size of a clique in $G$.
For a set $X \subseteq V(G),$ we denote by $G[X]$ the subgraph of $G$ induced by $X$, and by $G \setminus X$ the subgraph of $G$ induced by $V(G) \setminus X$. In this paper, we  use induced subgraphs and their vertex sets interchangeably. For a graph $H$, we say that $H$ is an \defn{induced subgraph of $G$}
if there exists $X \subseteq V(G)$ such that $H$ is isomorphic to $G[X]$, and we
say that $H$ is  an \defn{induced minor of $G$} if $H$ can be obtained from $G$ by a sequence of vertex deletions and edge contractions (and by deleting parallel edges generated in the process).

A \defn{path} in a graph is an induced subgraph that is a path. We denote by $P=p_1 \dd \dots \dd p_k$
a path in $G$, where $p_ip_j \in E(G)$ if $|j-i|=1$. We say that $p_1$ and $p_k$ are
the \defn{ends} of $P$. The \defn{interior} of $P$, denoted by
$P^*$, is the set $P \setminus \{p_1,p_k\}$.
For $i,j \in \{1, \dots, k\}$ we denote by $p_i \dd P  \dd p_j$ the subpath of $P$ with ends $p_i,p_j$.
The \defn{length} of a path is the number of edges in it.
A \defn{$(u,v)$-path} is a path whose set of endpoints is equal to~$\{u,v\}$. 
Given a graph~$G$ and sets~${X,Y \subseteq V(G)}$ of vertices, an \defn{$(X,Y)$-path} is an $(x,y)$-path in~$G$ with~${x \in X}$, ${y \in Y}$, 
and no internal vertex in~${X \cup Y}$. 
The \defn{distance between two vertices} $u,v \in V(G)$, denoted by $\dist_G(u,v)$, is the length of the shortest $(u, v)$-path in $G$. The \defn{distance between two sets} $X$ and $Y$, denoted by $\dist_G(X,Y)$, is the minimum of the distance $\dist_G(x,y)$ among all $x\in X$ and $y \in Y$. For a vertex $v\in V(G)$, we use $N^r_G(v)$ and $N^r_G[v]$ to denote the sets of vertices at distance exactly $r$ and at most $r$ from $v$ in $G$, respectively. For a set $X \subseteq V(G)$, we let $N^r_G[X] = \bigcup_{v\in X}N^r_G[v]$ and $N^r_G(X) = N^r_G[X]\sm N^{r-1}_G[X]$. 
We say that a vertex $u$ is an \defn{$r$-neighbor} of $v$ if $u \in N^r_G(v)$.
If $r=1$ we may omit the superscript or the prefix. Similarly, if the ambient graph $G$ is clear from the context, we may omit it in the notation of both distances and neighborhoods.

\section{Large $r$-neighborhoods: the proof of \cref{theLemma} }

The following result due to Chudnovsky, Codsi, Lokshtanov, Milani\v{c}, and Sivashankar \cite{treealpha5} is an important step in the proof of Theorem~\ref{thm: r=1 coarse bal sep thm}. 
\begin{lemma}[Theorem 5.1 in \cite{treealpha5}] \label{lem: TI5 thm base case}
    Let $C, t\in \N$ with $C \geq 2$, and let $G$ be $ K_{t,t}$-induced-minor-free. Fix $Y \subseteq V(G)$ and define 
    $$Z= \left\{z\in V(G) \colon \alpha(N(z)\cap Y) \geq \frac{\alpha(Y)}{C}\right\}.$$
    Then  $\min(\alpha(Y), \alpha(Z)) \leq (512C)^{2t^{2t}}$.
\end{lemma}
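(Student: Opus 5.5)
We argue by contradiction. Suppose $\alpha(Y) = a$ and $\alpha(Z)$ both exceed $N:=(512C)^{2t^{2t}}$, and aim to build an induced minor model of $K_{t,t}$. The model we look for has one side consisting of single vertices $z_1,\dots,z_t$ forming a stable subset of $Z$. The other side consists of connected sets $B_1,\dots,B_t\subseteq V(G)\sm\{z_1,\dots,z_t\}$ that are pairwise anticomplete and each contain a neighbour of every $z_i$. Contracting each $B_j$ to a vertex then yields an induced $K_{t,t}$. The form of the bound, with a tower-like exponent $2t^{2t}$ in $C$, suggests an induction in which each step shrinks the relevant independent sets by a polynomial factor in $C$, and the parameter is squared or raised to the $t$-th power per step.

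\textbf{Step 1 (normalising).} Fix a maximum stable set $I\subseteq Y$, so $|I| = a$. For each $z$ in a fixed stable set $Z_0\subseteq Z$ with $|Z_0|>N$, fix a stable set $S_z\subseteq N(z)\cap Y$ with $|S_z|\ge a/C$. Maximality of $I$ gives each vertex of $S_z\sm I$ a neighbour in $I$. Pass to a stable set $S'_z\subseteq S_z$ of size $\ge a/(2C)$ in which either every vertex lies in $I$, or every vertex lies outside $I$ and is matched injectively (up to a factor) to private neighbours in $I$. In the second case, form $T_z\subseteq I$ of size $\Omega(a/C)$ such that each $u\in T_z$ lies in a connected set $\{u,y_u\}$ with $y_u\in S_z$ adjacent to $z$. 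This reduces to the setting where every $z\in Z_0$ ``reaches'' a subset $T_z$ of the fixed stable set $I$ of density $\ge 1/(4C)$, via paths of length at most $2$.

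\textbf{Step 2 (density/Ramsey).} Since $|Z_0|$ is huge and each $T_z$ has density $\ge 1/(4C)$ in $I$, an averaging (Kővári–Sós–Turán style) count produces $t$ vertices $z_1,\dots,z_t\in Z_0$ whose sets $T_{z_i}$ have a common part $W\subseteq I$. More usefully, it produces many $u\in I$ that are reached by all of $z_1,\dots,z_t$. Each such $u$, together with its connectors $y_u^{(i)}$, gives a candidate connected set $B_u$ touching every $z_i$.

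\textbf{Step 3 (the main obstacle: making the $B_u$ pairwise anticomplete).} The $u$'s are stable because they lie in $I$, but the connectors $y^{(i)}_u$ may be adjacent across different $u$'s, or adjacent to the wrong $z_j$. Here I would use induction on $t$. Consider the auxiliary graph on the candidates where two candidates are joined if their branch sets touch. If this graph has a large stable set, pick $t$ candidates and we are done. Otherwise, by Ramsey it has a large clique. A large family of pairwise-touching connected sets, each touching many stable vertices $z_i$, can itself be contracted to produce a $K_{t-1,t-1}$-type structure with fresh density parameters, roughly $C\mapsto (512C)^{O(t)}$. This gives a contradiction with the induction hypothesis applied to $t-1$ (or to a smaller instance).

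Tracking the losses in this recursion would give the bound $(512C)^{2t^{2t}}$. I expect the bookkeeping in Step 3 to be the hardest part: the connectors must remain disjoint from, and non-adjacent to, the chosen $z$'s, while the densities survive each Ramsey step.
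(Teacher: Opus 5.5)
The paper does not prove this lemma; it imports it from \cite{treealpha5}. So I am judging your outline on its own terms.

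\textbf{What works.} Steps 1 and 2 are sound once made precise.
\begin{itemize}
\item The injection in Step 1 should come from Hall's theorem, using that $I$ is a \emph{maximum} stable subset of $Y$, not merely maximal. If some $S'\subseteq S_z\sm I$ had $|N(S')\cap I|<|S'|$, then $(I\sm N(S'))\cup S'$ would be a larger stable subset of $Y$. This gives an exact injection $S_z\sm I\to I\sm S_z$ along edges, with no loss of a factor.
\item Averaging over $t$-subsets of a stable $Z_0$ with $|Z_0|\ge 4Ct$ gives $Q=\{z_1,\dots,z_t\}$ whose common reach $W\subseteq I$ has size about $\alpha(Y)/(4C)^t$.
\item The stable-set branch of Step 3 also works. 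Discard $u\in Q$, and thin $W$ so that the stars $B_u$ are pairwise disjoint; a connector outside $I$ lies in at most $t$ of them. Connectors are neighbours of some $z_i$, so they avoid the stable set $Q$.
\end{itemize}

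\textbf{The gap.} The genuine gap is the clique branch of Step 3, which is exactly where the contradiction must come from.
\begin{itemize}
\item A large family of pairwise touching connected sets is not contradictory in itself. The lemma places no bound on $\omega(G)$, complete graphs have no $K_{t,t}$ induced minor for $t\ge 2$, and contracting such a family only produces a clique.
\item Your proposed induction on $t$ is not an argument. $G$ is not $K_{t-1,t-1}$-induced-minor-free, and you never say to which instance ($Y$, $Z$, $C$) the hypothesis would be applied.
\end{itemize}

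\textbf{The missing idea.} Every $B_u$ lies in $I\cup S_{z_1}\cup\dots\cup S_{z_t}$, which is a union of $t+1$ stable sets.
\begin{itemize}
\item Suppose the touching graph has a clique of size $K(t+1,t(t+1))$: disjoint sets of size at most $t+1$ with an edge between every two.
\item Apply \cref{lem: disjointsubsets} to $G[\bigcup_u B_u]$. This gives a $K_{T,T}$ subgraph with $T=t(t+1)$ inside a $(t+1)$-colourable graph.
\item Take a majority colour class on each side. This yields disjoint stable sets $A',B'$ of size at least $t$ that are complete to each other, i.e.\ an induced $K_{t,t}$.
\end{itemize}
This is the same Ramsey, Lozin--Razgon and colouring mechanism the paper uses to finish \cref{thm: TI5 generalization alpha}. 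There the bounded colouring comes from $\omega(G)<s$ and the bipartite graph $G''$; here it comes for free from the stable sets. Hence $|W|\ge t^2R(K(t+1,t(t+1)),t)+t$ suffices.

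\textbf{The bound.} Your claim that tracking losses yields exactly $(512C)^{2t^{2t}}$ is unsupported. The repaired argument gives a bound of the form $c_t(4C)^t$. That is polynomial in $C$, which is all the paper uses later. Matching the stated constant would still require a separate check of how the Ramsey and Lozin--Razgon numbers depend on $t$.
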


The objective of this section is to prove \cref{theLemma}, which can be seen as a generalization of Lemma \ref{lem: TI5 thm base case} to distance-$r$ neighborhoods. This will be a crucial step in the proof of Theorem~\ref{thm: general coarse bal sep thm}, and is likely to  be of independent interest.

We start with the following:
\begin{theorem} \label{thm: TI5 generalization alpha}
     There exists a function $M(C, s,t,r)$ that is polynomial in the parameters $s$ and $C$ such that the following holds: Let $C,s,t, r\in \N$ and let $G\in\class$ such that $\omega(G)<s$ and $C\geq 2$. Let $Y \subseteq V(G)$. Then there is a subset $X \subseteq V(G)$ with $\alpha(X) \leq M(C,s,t,r)$ such that if
    $$Z= \left\{z\in V(G)\sm X\  \colon \alpha(N^{r}_{G\sm X}[z]\cap Y) \geq \frac{\alpha(Y)}{C}\right\}$$
    then $\min(\alpha(Y), \alpha(Z)) \leq M(C, s,t,r)$.
\end{theorem}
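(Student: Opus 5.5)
We argue by induction on $r$. The base case $r=1$ is \cref{lem: TI5 thm base case} with $X=\emptyset$, which gives $M(C,s,t,1)=(512C)^{2t^{2t}}$; this is polynomial in $C$ and does not depend on $s$. For the inductive step, assume the statement holds for $r-1$ with a function $M_{r-1}$. Write $N^r[z]=N[N^{r-1}[z]]$. Each $z$ with $\alpha(N^r_{G\sm X}[z]\cap Y)\ge \alpha(Y)/C$ then falls into one of two cases. Either its $(r-1)$-ball already sees $Y$ with independence number at least $\alpha(Y)/(2C)$, or it does not, and then the vertices at distance exactly $r$ carry most of the large independent set of $Y$. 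In the second case the mass is reached through the set $W$ of vertices $w$ that are adjacent to many elements of $Y$.

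Concretely, I would first apply \cref{lem: TI5 thm base case} to $Y$ with a constant $C'=\mathrm{poly}(C,s)$. This produces the set $Z_1$ of vertices whose neighbourhood in $Y$ has independence number at least $\alpha(Y)/C'$. Either $\alpha(Y)$ is bounded, and we are done, or $\alpha(Z_1)$ is bounded. Next I would delete $X_1:=Z_1$. A set of bounded independence number is an admissible part of $X$, since only $\alpha(X)$ is constrained. After this deletion, every vertex of $G\sm X_1$ sees at most $\alpha(Y)/C'$ of $Y$ in the independence sense. Now set $Y'=N_{G\sm X_1}[Y]$. Then $N^r_{G\sm X_1}[z]\cap Y\subseteq N[N^{r-1}_{G\sm X_1}[z]\cap Y']$. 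I would apply the induction hypothesis to the graph $G\sm X_1$ and the set $Y'$, with a suitable constant $C''$, to obtain $X_2$, and take $X=X_1\cup X_2$. Then $\alpha(X)\le \alpha(X_1)+\alpha(X_2)$, which stays polynomial.

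The crux is converting independence number of $Y$ into independence number of $Y'$ and back. We need two things:
\begin{enumerate*}[label=(\alph*)]
\item $\alpha(Y')\le \mathrm{poly}(s,C)\cdot\alpha(Y)$, which is needed to compare thresholds;
\item if $\alpha(N^r[z]\cap Y)\ge \alpha(Y)/C$, then $\alpha(N^{r-1}[z]\cap Y')\ge \alpha(Y')/C''$.
\end{enumerate*}
Part (b) is where bounded clique number and $K_{t,t}$-induced-minor-freeness are used. A large independent set $I\subseteq Y$ reached at distance $r$ has neighbours at distance $r-1$. Since each such neighbour has few independent $Y$-neighbours after the deletion of $X_1$, many distinct neighbours are needed to cover $I$. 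Among those, a Ramsey argument with $\omega<s$ extracts an independent set of size about $|I|/(C'\cdot R(s,\cdot))$. For part (a) I would instead avoid passing to all of $Y'$. Choose a maximum independent set $I_0$ of $Y$ and work with a restricted set $Y'$ of neighbours of $I_0$. One then needs that $K_{t,t}$-induced-minor-freeness together with $\omega<s$ bounds $\alpha(N(I_0))$ by a polynomial multiple of $|I_0|$ once high-degree vertices have been removed. This is plausibly obtained by applying \cref{lem: TI5 thm base case} in reverse, or via a degeneracy-type property of $K_{t,t}$-induced-minor-free graphs with bounded clique number, which are known to have bounded degeneracy in the relevant sense.

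The main obstacle I expect is this back-and-forth comparison between $\alpha(Y)$ and $\alpha(Y')$, together with keeping the $\alpha(Z)$ conclusion meaningful. At the end, the set $Z$ for radius $r$ must be shown to be contained in the union of the radius-$(r-1)$ bad set for $Y'$ and a set of bounded independence number. If that direct containment fails, a fallback is to also add the radius-$(r-1)$ bad set to $X$. This is legitimate because its independence number is bounded by $M_{r-1}$, and after that addition the remaining $Z$ is empty. The argument gives a recursion of the form $M_r=\mathrm{poly}(M_{r-1},C,s)$, which stays polynomial in $C$ and $s$ for fixed $r$ and $t$.
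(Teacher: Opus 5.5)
\paragraph{There is a genuine gap in the reduction to radius $r-1$.} You replace $Y$ by $Y'=N_{G\sm X_1}[Y]$, but both (a) and (b) are false, and so is your fallback.

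For (a), take a star forest whose centres form $Y$, each centre having $L$ pendant leaves. For $\alpha(Y)>C'$ no vertex lies in $Z_1$, yet $\alpha(Y')\ge L\,\alpha(Y)$. Bounded degeneracy cannot rescue this, since the graph is $1$-degenerate. Restricting to neighbours of a maximum independent set $I_0\subseteq Y$ does not help either: here $I_0=Y$, and the only high-degree vertices are the centres, which cannot be put into $X$.

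Statement (b) fails even when $\alpha(Y')=\alpha(Y)$. Let $Y=\{y_1,\dots,y_n\}$ be independent. Let $w_1,\dots,w_m$ each be adjacent to a private block of $n/m$ vertices of $Y$, and let $z$ be adjacent to all $w_i$. This graph is a tree, so it lies in $\class$ with $\omega=2$. Take $r=2$ and choose $\max(C',C'')<m<n/C''$. Then:
\begin{itemize}
\item $Z_1=\emptyset$ and $\alpha(Y')=n$;
\item the radius-$1$ bad set for $Y'$ is empty, because $\alpha(N[z]\cap Y')=m$ and $\alpha(N[w_i]\cap Y')=n/m$ are both below $n/C''$;
\item nevertheless $N^2[z]\cap Y=Y$, so $z\in Z$.
\end{itemize}
Hence your fallback claim, that $Z$ is empty after deleting the radius-$(r-1)$ bad set, is false.

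The counting behind (b) is also off. Deleting $Z_1$ only caps each vertex at a \emph{fraction} $1/C'$ of $\alpha(Y)$. Covering $I$ therefore forces only $|I|C'/\alpha(Y)\ge C'/C$ vertices at distance $r-1$. That is a constant, not a quantity of order $|I|/C'$.

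\paragraph{What is missing is a use of $K_{t,t}$-induced-minor-freeness at radius $r$ itself.} A single bad vertex as in the example is harmless. What must be shown is that many pairwise non-adjacent bad vertices force a $K_{t,t}$. For instance, replicating $z$ with private copies of the $w_i$ yields a $K_{N,n}$ induced minor. No local radius reduction of your type can detect this.

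The paper uses the radius-$(r-1)$ hypothesis only for cleaning. It applies it with the much larger constant $2C\iota(R+1)$ and deletes $X_{r-1}\cup Z_{r-1}$. As a result, for an independent set $I$ of $\iota$ bad vertices, removing $N^{r-1}[I]\cap Y$ costs little, and every $z\in I$ still reaches $\alpha(Y)/(2C)$ of $Y$ at distance exactly $r$.

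The paper then argues directly with length-$r$ paths:
\begin{itemize}
\item Fixing a single colour type makes the layered union $G''$ of these paths bipartite.
\item The Ding--Seymour--Winkler bound on $\tau$ in terms of $\lambda$ and $\nu$ applies. Here $\nu$ is small because $Y^\star$ is independent, and $\lambda<2t$ because otherwise a $1$-subdivision of $K_{2t}$ appears.
\item This gives a common endpoint for a $1/\tau$ fraction of the current set. Iterating $R$ times yields $t$ vertices $S$ and $R$ disjoint connected sets of size at most $rt$, each dominating $S$.
\item Ramsey's theorem then gives either $t$ pairwise anticomplete such sets, hence a $K_{t,t}$ induced minor, or $K$ pairwise touching ones. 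In the latter case Lozin--Razgon gives a $K_{t,t}$ subgraph of the bipartite graph $G''$, again a contradiction.
\end{itemize}
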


To prove this, the following results will be required. For positive integers $a,b$, let $R(a,b)$ be the smallest integer, often referred to as the \defn{Ramsey number}, such that every graph on $R(a,b)$ vertices contains either a clique of size $a$ or stable set of size $b$.

\begin{theorem}[\cite{ramsey}]\label{thm:ramsey}
    For all $a,b\in \N$, $R(a,b) \leq a^{b-1}$.
\end{theorem}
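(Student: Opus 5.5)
The plan is the classical Erd\H{o}s--Szekeres argument. I will first prove the binomial bound $R(a,b)\le\binom{a+b-2}{a-1}$ and then show that this binomial coefficient is at most $a^{b-1}$.

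\emph{Base cases.} If $a=1$ or $b=1$, then any graph on one vertex contains a clique of size $1$ and a stable set of size $1$. Hence $R(1,b)=R(a,1)=1$. This matches both $\binom{a+b-2}{a-1}=1$ and $a^{b-1}\ge 1$ (when $b=1$ we have $a^{0}=1$, and when $a=1$ we have $1^{b-1}=1$).

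\emph{Recurrence.} For $a,b\ge 2$ I will show $R(a,b)\le R(a-1,b)+R(a,b-1)$. Take a graph $G$ on $R(a-1,b)+R(a,b-1)$ vertices and fix a vertex $v$. Since $|N(v)|+|V(G)\sm N[v]| = |V(G)|-1$, either $|N(v)|\ge R(a-1,b)$ or $|V(G)\sm N[v]|\ge R(a,b-1)$.
\begin{itemize}
\item In the first case, $G[N(v)]$ contains a stable set of size $b$, or a clique of size $a-1$; adding $v$ to such a clique gives a clique of size $a$.
\item In the second case, $G\sm N[v]$ contains a clique of size $a$, or a stable set of size $b-1$; adding $v$ to such a stable set gives a stable set of size $b$.
\end{itemize}
Induction on $a+b$ then yields $R(a,b)\le\binom{a+b-3}{a-2}+\binom{a+b-3}{a-1}=\binom{a+b-2}{a-1}$ by Pascal's rule.

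\emph{Comparison with $a^{b-1}$.} I will write
$$\binom{a+b-2}{a-1}=\binom{a+b-2}{b-1}=\prod_{i=1}^{b-1}\frac{a-1+i}{i}$$
and bound each factor separately. For every $i\ge 1$ we have $a-1+i\le ai$, because this inequality is equivalent to $a-1\le (a-1)i$, which holds since $a\ge 1$ and $i\ge 1$. So each factor is at most $a$, and the product is at most $a^{b-1}$, as claimed.

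There is no real obstacle here. The only points needing care are the degenerate base cases ($a=1$ or $b=1$) and stating the recurrence with the vertex count exactly $R(a-1,b)+R(a,b-1)$.
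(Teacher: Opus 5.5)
Your proof is correct and complete. The recursion $R(a,b)\le R(a-1,b)+R(a,b-1)$ with the pigeonhole count, induction on $a+b$ with Pascal's rule giving $R(a,b)\le\binom{a+b-2}{a-1}$, and the factor-wise estimate $\frac{a-1+i}{i}\le a$ (equivalent to $(a-1)(i-1)\ge 0$) are all sound, and the base cases are handled correctly.

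The paper gives no proof of this statement. It imports the bound from the literature as a classical Ramsey estimate, and your Erd\H{o}s--Szekeres argument is exactly the standard proof the citation stands in for. There is therefore no alternative route to compare.

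One step is left implicit. $R(a,b)$ is defined via graphs on exactly $R(a,b)$ vertices, so you use monotonicity: a graph on at least $R(a-1,b)$ vertices, such as $G[N(v)]$, also contains the required clique or stable set, because you can restrict to an induced subgraph on exactly $R(a-1,b)$ vertices. This is trivial but worth one sentence.

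In the paper's application only the finiteness of $R(K(rt,t),t)$ matters. It is a constant depending on $r$ and $t$, so the specific form $a^{b-1}$ plays no role in the polynomiality claims.
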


 A \defn{hypergraph} $H = (V(H),E(H))$ consists of a non-empty set of vertices $V(H)$ and a non-empty set of \defn{hyperedges} $E(H) \subseteq 2^{V(H)}$.  Given a hypergraph $H$, we define the following parameters. We denote by $\tau(H)$ the minimum size of a \defn{hitting set}, which is a set of vertices that meets all the hyperedges. We denote by $\nu(H)$ the maximum size of a \defn{matching}, which is a set of pairwise disjoint hyperedges. We denote by $\lambda(H)$ the size of the largest set $E$ of hyperedges such that for every $e,f \in E$, there exists $v\in V(H)$ such that $$v\in (e\cap f)\sm \bigcup_{\substack{g\in E\\ g\neq e,f}} g.$$

We will require the following result due to Ding, Seymour, and Winkler.

\begin{theorem}[Theorem 1.1 in \cite{DSW94}]\label{hypergraphs hitting packing}
For every hypergraph $H$ with no empty hyperedge, $$\tau(H)\leq 11\lambda(H)^2(\lambda(H)+\nu(H)+3)\binom{\lambda(H)+\nu(H)}{\nu(H)}^2.$$    
\end{theorem}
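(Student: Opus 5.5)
Although this is quoted from \cite{DSW94}, here is how I would prove it: a double induction on $\lambda(H)+\nu(H)$ that peels off one hyperedge at a time. I am confident in the peeling step. I am not confident that the trace step below closes, so the plan may well fail there.

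First, reduce to the case where $H$ is a clutter. Replace $H$ by the hypergraph $H_{\min}$ of inclusion-minimal hyperedges. A set hitting every minimal hyperedge hits every hyperedge, so $\tau(H)=\tau(H_{\min})$. Also $\nu(H_{\min})=\nu(H)$, since each edge of a maximum matching contains a minimal edge, and these are still pairwise disjoint. I still need to check that $\lambda(H_{\min})\le\lambda(H)$, because the private vertices of a pairwise intersection must survive the shrinking. If this fails, I will instead run the whole argument on $H$, choosing at each step an edge $e$ that is minimal.

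Next, the peeling step. Fix a minimal edge $e$ and split $E(H)=\mathcal{A}\cup\mathcal{B}$, where $\mathcal{A}$ consists of the edges meeting $e$ (including $e$) and $\mathcal{B}$ of the edges disjoint from $e$. Adding $e$ to any matching of $\mathcal{B}$ gives a matching of $H$, so $\nu(\mathcal{B})\le\nu(H)-1$. Also $\lambda(\mathcal{B})\le\lambda(H)$, since a $\lambda$-configuration in a subhypergraph is still one in $H$. Hence the induction hypothesis bounds $\tau(\mathcal{B})$, and $\tau(H)\le\tau(\mathcal{A})+\tau(\mathcal{B})$.

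To bound $\tau(\mathcal{A})$, pass to the trace hypergraph $H_e=\{e\cap f: f\in\mathcal{A}\}$ on $e$. Any set hitting all traces hits all of $\mathcal{A}$. The aim is to show that $H_e$ has either a smaller $\lambda$ or a smaller $\nu$ than $H$, up to the terms the formula absorbs. The intended route is that a large matching or $\lambda$-configuration among the traces lifts back to one in $H$ that also involves $e$, so the index drops by one. Applying induction to $H_e$ together with a Pascal-type identity $\binom{n}{k}=\binom{n-1}{k}+\binom{n-1}{k-1}$ should produce the squared binomial. The factor $11\lambda^2(\lambda+\nu+3)$ should come from summing the recursion over the at most $\lambda+\nu$ levels.

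The main obstacle is this trace step, and it is the part I am least sure of. Traces can coincide, and a $\lambda$-configuration among traces need not lift, because private vertices inside $e$ may also lie in $e$ itself. My first attempt would be to show that if $f_1,\dots,f_m\in\mathcal{A}$ have traces forming a $\lambda$-configuration in $H_e$, then $\{f_1,\dots,f_m\}$ itself forms one in $H$ of size $m$. When $m\le\lambda(H)$ this holds automatically. I would then obtain the strict decrease by a counting argument that chooses $e$ to maximize the number of edges it meets. If that fails, I would fall back on bounding $\tau(H_e)$ directly through a sunflower or Ramsey-type argument on the traces, at the cost of a worse constant.
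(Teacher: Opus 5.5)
The paper gives no proof of this statement; it is quoted from Ding, Seymour and Winkler. So I am judging your plan on its own. It has a genuine gap at exactly the step you flagged, and the gap is fatal rather than technical.

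Your preliminary reductions are fine, and easier than you feared. First, $\lambda(H_{\min})\le\lambda(H)$ is immediate because $E(H_{\min})\subseteq E(H)$. Second, $\lambda(H_e)\le\lambda(H)$ always holds: if traces $e\cap f_1,\dots,e\cap f_m$ form a $\lambda$-configuration with private vertices $v_{ij}\in e$, then $v_{ij}\notin f_l$ for $l\neq i,j$.

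The failure is in $\nu$. Traces that are disjoint inside $e$ can come from edges that meet outside $e$. So $\nu(H_e)$, and with it $\tau(H_e)$, is not bounded by any function of $\nu(H)$ and $\lambda(H)$. Concretely, on the vertex set $[n]^2\cup\{w_R,w_C\}$ take the rows $R_i=\{(i,j):j\in[n]\}\cup\{w_R\}$ and the columns $C_j=\{(i,j):i\in[n]\}\cup\{w_C\}$.
\begin{itemize}
\item This clutter is intersecting, so $\nu(H)=1$.
\item $\lambda(H)=4$: since $R_i\cap R_{i'}=\{w_R\}$ lies in every row, a configuration contains at most two rows, and likewise at most two columns.
\item $\tau(H)=2$.
\item Yet for every edge, say $e=R_1$, the traces contain the $n+1$ pairwise disjoint sets $\{(1,1)\},\dots,\{(1,n)\},\{w_R\}$. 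Hence $\tau(H_e)=n+1$ and $\lambda(H_e)+\nu(H_e)=n+3$.
\end{itemize}
Since $\nu(H)=1$, your peeling step is vacuous here ($\mathcal{B}=\emptyset$, $\mathcal{A}=E(H)$). The whole bound therefore rests on $\tau(H_e)$, which is unbounded for every choice of $e$. No rule for selecting $e$ can help (all edges even have the same degree here), and no Pascal recursion can rescue it.

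What is missing is a way to bound $\tau$ for intersecting families with bounded $\lambda$ that lets the hitting set use vertices outside $e$. Your scheme contains no argument for this base case $\nu=1$ at all. One route that does work is global rather than edge-by-edge:
\begin{itemize}
\item Let $y$ be an optimal fractional matching, so $\sum_f y_f=\nu^*=\tau^*$. Sample $N=\binom{\lambda+\nu}{\nu}\ge R(\lambda+1,\nu+1)$ edges independently, each with probability $y_f/\nu^*$.
\item For each intersecting sampled pair, fix a common vertex. By the matching constraint it lies in a given other sampled edge with probability at most $1/\nu^*$. Two samples coincide with probability at most $1/\nu^*$, since nonempty edges give $y_f\le 1$.
\item So if $\nu^*>\binom{N}{2}(N-1)$, some sample consists of distinct edges in which every intersecting pair has a private common vertex. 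Ramsey's theorem on its intersection graph then yields either $\nu+1$ disjoint edges or a $\lambda$-configuration of size $\lambda+1$, a contradiction. Hence $\tau^*$ is at most cubic in $\binom{\lambda+\nu}{\nu}$.
\item For $k\ge 3$, shattering a set indexed by the pairs of $[k]$ produces a $\lambda$-configuration of size $k$. So $H$ has VC-dimension $O(\lambda^2)$, and the $\varepsilon$-net theorem rounds the fractional cover to $\tau=O(\lambda^2\tau^*\log(\tau^*+1))$.
\end{itemize}
This gives a bound of the same shape as the quoted one, with a cube in place of the square. It is this fractional/Ramsey input, not a recursion on traces, that your proposal lacks.
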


Let $G$ be a graph.
For $k\in \N$, a \defn{proper $k$-coloring} of $G$ is a mapping $c: V(G) \to [k]$ such that $c(u)\neq c(v)$ whenever $uv \in E(G)$. The \defn{chromatic number} of $G$, denoted by $\chi(G)$, is the minimum $k$ such that $G$ admits a proper $k$-coloring. Since every color class forms an independent set, we deduce the following simple observation.
\begin{observation} \label{obs: size bound by alpha and chromatic number}
    For every  graph $G$, $|V(G)| \leq \alpha(G) \cdot \chi(G)$. 
\end{observation}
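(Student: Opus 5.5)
The plan is to fix an optimal coloring and count vertices color class by color class. Let $k=\chi(G)$ and let $c\colon V(G)\to[k]$ be a proper $k$-coloring of $G$, which exists by the definition of the chromatic number. For each $i\in[k]$ let $V_i=c^{-1}(i)$ be the $i$-th color class. These classes partition $V(G)$ (some of them may be empty), so
$$|V(G)|=\sum_{i=1}^{k}|V_i|.$$

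The key step is to note that each $V_i$ is an independent set. If $u,v\in V_i$ were adjacent, then $c(u)=c(v)=i$ would contradict properness of $c$. Hence $|V_i|\le\alpha(G)$ for every $i\in[k]$ by the definition of the independence number. Summing over the $k$ classes gives
$$|V(G)|\le k\cdot\alpha(G)=\alpha(G)\cdot\chi(G),$$
as claimed.

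There is no genuine obstacle here; the only point to check is that the color classes partition $V(G)$, which holds because $c$ is a function defined on all of $V(G)$. If $V(G)=\emptyset$ the inequality is trivial. The observation will be applied later to induced subgraphs such as $G[Y]$ or $G[Z]$, where $\chi$ is controlled, for instance via \cref{thm:ramsey} together with $\omega(G)<s$. In that setting it converts bounds on $\alpha$, like the ones in \cref{thm: TI5 generalization alpha}, into bounds on cardinality, as needed for \cref{theLemma}.
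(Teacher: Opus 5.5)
Your proof is correct and is exactly the paper's argument: the color classes of an optimal proper coloring are independent sets, each of size at most $\alpha(G)$, and there are $\chi(G)$ of them. One small correction to your closing aside: in the later applications, $\chi$ is bounded by the polynomial $\chi$-boundedness of $\class$ (\cref{lem: poly chi bound}), not by \cref{thm:ramsey}, since $\omega(G)<s$ alone does not bound $\chi$.
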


A class of graphs $\cC$ is \defn{(polynomially) $\chi$-bounded} if there exists a (polynomial) function $p: \N \to \N$ such that $\chi(G) \leq p(\omega(G))$ for every $G \in \cC$. 
Bourneuf, Buci\'c, Cook, and Davies \cite{BBCD23} proved the following result. 
\begin{lemma}[Theorem 1.2 \cite{BBCD23}]\label{lem: poly chi bound}
   For every $t \in \N$, $\class$ is polynomially $\chi$-bounded.
\end{lemma}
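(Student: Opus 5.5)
This lemma is quoted from \cite{BBCD23} and the paper uses it as a black box. If I had to reprove it, I would go through \emph{polynomial degree-boundedness} and then a Gy\'arf\'as-type levelling argument. I am confident of the overall shape but not of the details of the final step.

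\textbf{Degree-boundedness.} The first step is to show that for every $t$ there is a polynomial $d_t$ such that every graph in $\class$ with no $K_{m,m}$ subgraph (not necessarily induced) is $d_t(m)$-degenerate. Since $\class$ is closed under induced subgraphs, it suffices to bound the average degree. I would argue by contradiction. Suppose $G$ has average degree much larger than $d_t(m)$. First pass to a subgraph with large minimum degree, and then to a bipartite subgraph of large minimum degree by a random bipartition. Next, use Kővári--Sós--Turán-type counting together with a dependent random choice step to find many vertex-disjoint, internally sparse connected subgraphs (short paths or stars). These would serve as branch sets, pairwise joined as required, with no extra edges, and would then be contracted to an induced $K_{t,t}$ minor. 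The absence of a $K_{m,m}$ subgraph is exactly what lets one clean the branch sets so that they induce the correct pattern. The aim throughout is to keep every loss polynomial in $m$.

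\textbf{From degree-boundedness to $\chi$-boundedness.} Now let $G\in\class$ with $\omega(G)<s$. By \cref{thm:ramsey}, any $K_{m,m}$ subgraph with $m\ge R(s,t')$ has sides containing stable sets of size $t'$ that are still completely joined. So $G$ contains an induced $K_{t',t'}$, and in particular a $K_{t,t}$ induced minor, once $t'\ge t$. Hence such a $G$ has no $K_{m,m}$ subgraph for $m=s^{t}$, and is $d_t(s^t)$-degenerate. This bound is polynomial in $s$ for fixed $t$, so $\chi(G)\le d_t(s^t)+1$. In other words, the bounded-clique case follows from the first step almost immediately.

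\textbf{Main obstacle.} The hard part is the first step, namely producing an induced $K_{t,t}$ minor with only polynomial losses from a dense graph with no large biclique. Known qualitative proofs, such as those for degeneracy of graphs excluding a fixed induced minor, lose towers or exponentials through iterated Ramsey arguments. To keep the bound polynomial I would first try a single round of dependent random choice. This should give a set $U$ in which every $t$-subset has many common neighbours. I would then greedily route short disjoint paths through these common neighbourhoods, and use the $K_{m,m}$-freeness only to delete the few chords that would spoil inducedness. If that fails, the fallback is a weaker non-polynomial bound, which would still give $\chi$-boundedness but not the polynomial dependence the paper needs later.
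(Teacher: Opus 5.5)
The paper does not prove this statement; it imports it from \cite{BBCD23}, so your opening sentence already matches what the paper does. Your second step is also correct. If $\omega(G)<s$, a $K_{m,m}$ subgraph with $m\ge R(s,t)$, where $R(s,t)\le s^{t-1}$, contains an induced $K_{t,t}$. So a polynomial degeneracy bound $d_t(m)$ for $K_{m,m}$-free graphs in $\class$ gives $\chi(G)\le d_t(s^{t})+1$. This is the standard way polynomial $\chi$-boundedness is extracted from polynomial degree-boundedness. However, as a reproof, everything rests on your first step. That step is essentially the main theorem of \cite{BBCD23}, a polynomial version of the K\"uhn--Osthus theorem on induced subdivisions in $K_{s,s}$-free graphs. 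Your sketch does not prove it.

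Two of the proposed moves fail as stated. First, a random bipartition gives a bipartite \emph{subgraph}, not an induced one. It need not lie in $\class$, and all edges inside the two sides are still edges of $G$, so they can destroy the inducedness of any minor you build there. Passing to a bipartite graph while staying inside a hereditary class, with polynomial losses, is itself a nontrivial result, not a one-line reduction. Second, $K_{m,m}$-freeness only says that no $m$ vertices have $m$ common neighbours. It gives no control over the edges between the short paths or stars you would route through the common neighbourhoods supplied by dependent random choice. So nothing makes the spoiling chords ``few'', and the proposed deletion step has no justification. Controlling all adjacencies among the branch sets at once, with only polynomial loss, is exactly the difficulty. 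Earlier arguments handled it only with exponential or worse bounds, and \cite{BBCD23} needed a new induced K\H{o}v\'ari--S\'os--Tur\'an-type theorem to resolve it. Without such an ingredient your first step remains an assumption. As you note yourself, a non-polynomial fallback would not give the polynomial dependence on $s$ needed in \cref{thm: TI5 generalization alpha} and \cref{theLemma}.
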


Lastly, we require a Ramsey-type result of Lozin and Razgon \cite{LR22}.
\begin{lemma} [Lemma 2 in \cite{LR22}] \label{lem: disjointsubsets} 
For all $d,t\in \N$, there is a positive integer $K = K(d, t)$ such that if a graph $G$ contains a collection of $K$ pairwise disjoint subsets of vertices, each of size at most $d$ and with at least one edge between every two of them, then $G$ contains a (not necessarily induced) $K_{t,t}$-subgraph.
\end{lemma}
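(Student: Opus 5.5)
The plan is a multicolour Ramsey argument on the index set of the subsets. Let $A_1,\dots,A_K$ be the given pairwise disjoint subsets, each of size at most $d$. I will fix an arbitrary enumeration $A_i=\{a_i^1,\dots,a_i^{d_i}\}$ with $d_i\le d$. For every pair of indices $i<j$ there is at least one edge between $A_i$ and $A_j$. So I choose one such edge, say the lexicographically smallest pair of positions, $a_i^{p}a_j^{q}$ with $p\le d_i$ and $q\le d_j$. I colour the pair $\{i,j\}$ with the ordered pair $(p,q)\in[d]\times[d]$. The order matters: $p$ is always the position in the set with the \emph{smaller} index. This is a colouring of the edges of the complete graph on vertex set $[K]$ with $d^2$ colours.

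Next I take $K=K(d,t)$ to be the multicolour Ramsey number $R_{d^2}(2t)$. This is the least $N$ such that every $d^2$-colouring of the edges of $K_N$ has a monochromatic clique on $2t$ vertices. It is finite by Ramsey's theorem; iterating \cref{thm:ramsey} also gives an explicit bound. With this choice we obtain indices $i_1<i_2<\dots<i_{2t}$ such that every pair among them receives the same colour $(p,q)$. In particular every $A_{i_\ell}$ has at least $p$ elements and at least $q$ elements, so $a_{i_\ell}^p$ and $a_{i_\ell}^q$ exist for each $\ell$. Here I use that each $i_\ell$ occurs both as the smaller and as the larger index of some pair, which holds since $t\ge1$; alternatively one restricts to the relevant halves.

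Now set $X=\{a_{i_1}^p,\dots,a_{i_t}^p\}$ and $Y=\{a_{i_{t+1}}^q,\dots,a_{i_{2t}}^q\}$. The sets $A_i$ are pairwise disjoint, so these $2t$ vertices are distinct, even when $p=q$. For $x=a_{i_\ell}^p\in X$ and $y=a_{i_m}^q\in Y$ we have $\ell\le t<m$, so $i_\ell<i_m$. The colour $(p,q)$ of $\{i_\ell,i_m\}$ then certifies that $xy\in E(G)$. Hence $G$ contains $K_{t,t}$ as a (not necessarily induced) subgraph, with bipartition $(X,Y)$.

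The only delicate point is the bookkeeping of the orientation: the colour must record which endpoint lies in the lower-indexed set. That is what makes the split into the first $t$ and the last $t$ indices produce a complete bipartite graph rather than only a partially adjacent one. Everything else is a direct application of Ramsey's theorem.
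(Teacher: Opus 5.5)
Your proof is correct. The paper gives no proof of this lemma: it is quoted from Lozin--Razgon and used only as a black box at the end of the proof of \cref{thm: TI5 generalization alpha}. So there is nothing internal to compare against, and your ordered-colour Ramsey argument stands as a complete, self-contained proof. The essential design choice is recording in the colour which endpoint lies in the lower-indexed set. That is what makes splitting the clique into its first $t$ and last $t$ indices produce a complete bipartite graph.

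\textbf{Correction.} It is not true that every $i_\ell$ occurs both as the smaller and as the larger index of a pair in the monochromatic clique. The index $i_1$ is never the larger one, and $i_{2t}$ is never the smaller one. So you cannot conclude that every $A_{i_\ell}$ has at least $\max(p,q)$ elements. What you actually use is your ``relevant halves'' alternative, which does hold:
\begin{itemize}
\item for $\ell\le t$, the pair $\{i_\ell,i_{2t}\}$ shows that $a^p_{i_\ell}$ exists;
\item for $m>t$, the pair $\{i_1,i_m\}$ shows that $a^q_{i_m}$ exists.
\end{itemize}
With this, your construction of $X$ and $Y$ goes through exactly as written.

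\textbf{What the bound buys.} Your $K(d,t)=R_{d^2}(2t)$ grows super-polynomially in $d$. This is harmless for the paper, which applies the lemma with $d=rt$. Then $K(rt,t)$ and $R(r)=R(K(rt,t),t)$ are constants depending only on $r$ and $t$. They enter the bounds only as constants, most significantly as the exponent in $\iota=c^{r+1}t\tau^{R(r)}$. So they affect the degree of $M$, but not its polynomiality in $s$ and $C$.

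If you want a bound polynomial in $d$, a counting argument suffices. The chosen edges for distinct pairs $\{i,j\}$ are distinct, so $G[\bigcup_i A_i]$ has at most $dK$ vertices and at least $\binom{K}{2}$ edges. The K\H{o}v\'ari--S\'os--Tur\'an theorem then forces a $K_{t,t}$ subgraph once $K\ge c_t d^{2t-1}$ for a suitable constant $c_t$.
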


%


\begin{proof} [Proof of Theorem~\ref{thm: TI5 generalization alpha}]

  Let $K(\cdot,\cdot)$ be the constant given by Lemma~\ref{lem: disjointsubsets}, let $R(\cdot,\cdot)$ denote the Ramsey number, and let $c_t(\cdot)$ be the polynomial given by Lemma~\ref{lem: poly chi bound}. We define   

    \begin{align*}
         &R(y) = R(K(yt,t),t) ,  &\nu(x,y) = 2xc_t(s)^{y+1}(R(y)+1)\\
         &\tau(x,y) = 11(2t)^2(2t+\nu(x,y)+3)\binom{2t + \nu(x,y)}{\nu(x,y)}^2, \text{  } &\iota(x,y) = c_t(s)^{y+1}t\cdot\tau(x,y)^{R(y)}
    \end{align*}
   
    and \begin{equation*}
        M(C, s,t,r) = 
       \begin{cases}
           (512C)^{2t^{2t}} \quad\quad\quad\quad\quad\quad\quad\quad\quad\quad\quad\quad \text{ if } r=1\\
           2 M(2C\iota(C,r)(R(r)+1), s,t,r-1) \quad \text{ otherwise.}
       \end{cases}
    \end{equation*}

    \sta{\label{Polynomial in s} The function $M(C,s,t,r)$ is a polynomial in the parameters $s$ and $C$.}
     By construction, $M(C,s,t,1)$ is a polynomial the parameters $C$ and $s$. Suppose inductively that $M(C,s,t,r-1)$ is a polynomial in $s$ and $C$. Since $c_t(\cdot)$ is a polynomial, the function $\nu(C,r)$ is a polynomial in $s$ and $C$.
    Note that the binomial term in $\tau(C,r)$ is bounded by $\binom{2t+\nu(C,r)}{\nu(C,r)} \leq \frac{(2t+\nu(C,r))^{2t}}{(2t)!} $. Since $\tau(C,r)$ is a polynomial in $s$ and $C$, we conclude that $\iota(C,r)$ is a polynomial in $s$ and $C$. Since the composition of polynomials is a polynomial, we conclude that  $2 M(2C\iota(C,r)(R(r)+1), s,t,r-1)$ is a polynomial in the parameters $s$ and $C$. This proves \eqref{Polynomial in s}. \\

    We may assume that $\alpha(Y) > M(C,s,t, r)$ as otherwise there is nothing to show.
    We proceed by induction on $r$. When $r=1$, the result follows from Lemma~\ref{lem: TI5 thm base case} with $X = \emptyset$ and the observation that $\alpha(N[z] \cap Y) = \alpha(N(z) \cap Y)$, unless $N[z] \cap Y = z$ in which case $z\notin Z$ since $\frac{\alpha(Y)}{C} > 1$. 

    Now let $r \in \N$, and assume that the result holds for $1,\dots, r-1$.
    By definition of $M$, we have $\alpha(Y) > M(2C\iota(C,r) (R(r)+1),s,t, r-1)$.
    By the inductive hypothesis, there is a set $X_{r-1}$ with 
    $\alpha(X_{r-1}) \leq M(2C\iota(C,r)(R(r)+1),s,t, r-1) $ 
    for which 
    $$Z_{r-1} = \set{z\in V(G)\sm X_{r-1} : \alpha(N_{G\sm X_{r-1}}^{r-1}[z] \cap Y) \geq \frac{\alpha(Y)}{2\iota(C,r) C(R(r)+1)}}$$
    has $\alpha(Z_{r-1}) \leq M(2C\iota(C,r)(R(r)+1),s,t,r-1)$.
    Let $X = Z_{r-1} \cup X_{r-1}$. Then, since $r \geq 2$,
\begin{align*}
    \alpha(X) \leq 2 M(2C\iota(C,r)(R(r)+1),s,t,r-1) =  M(C,s,t,r).
\end{align*}

 In what follows, we abuse the notation by setting $\iota(C,r) = \iota$, $R = R(r)$, and $c=c_t(s)$.

    \sta{\label{the lemma claim cleaned r-1} For every $v\in G\sm X$, we have that $\alpha(N_{G\sm X}^{r-1}[v] \cap Y) < \frac{\alpha(Y)}{2\iota C(R+1)}$}
    
    \eqref{the lemma claim cleaned r-1} is a direct consequence of the definition of $X$.
    \\
    
    Let
    $$Z= \set{z\in V(G)\sm X : \alpha(N_{G \sm X}^{r}[z] \cap Y) \geq \frac{\alpha(Y)}{C}},$$
    we will show that $\alpha(Z) \leq M(C,s,t,r)$. For the sake of a contradiction, suppose not.  Let
    $I$ be  an  independent subset of $Z$ of size $\iota$ which exists since $M(C,s,t,r) \geq \iota$.
    Let $Y_{r-1} = N_{G\sm X}^{r-1}[I] \cap Y$.
    Set $G' = G\sm X$ and $Y ' = Y \sm (X\cup Y_{r-1})$. Then in $G'$ there is no path of length less than $r$ from $I \cap G'$ to $Y'$.

\sta{\label{claim that I still has paths to Y'} For every $z\in I$, we have that $\alpha(N_{G\sm X}^{r}(z) \cap Y') \geq  \frac{\alpha(Y)}{2 C}$}

It follows from \eqref{the lemma claim cleaned r-1} that 
    \begin{align*}
    \alpha (N_{G'}^{r}(z) \cap Y') 
    &\geq\alpha (N_{G\setminus X}^{r}[z] \cap Y)  -\alpha (Y_{r-1})  \\
    &>  \frac{\alpha(Y)}{C} - |I| \cdot \frac{\alpha(Y)}{2C\iota (R+1)}\\
     & \geq \frac{\alpha(Y)}{2C}.
    \end{align*}
  In particular, for every $z\in I$, there exist paths from $z$ to $Y'$ of length $r$. This proves \eqref{claim that I still has paths to Y'}.\\
   

Since $\class$ is hereditary and by \cref{lem: poly chi bound}, $\chi(G') \leq c$. 
Fix a proper $c$-coloring of $G'$. For a given path $p_0\dd p_1\dd \dots\dd p_r$ of length $r$ in $G'$, we call the sequence of colors $(c(p_0), c(p_1), \dots, c(p_r))$ the \defn{type} of the path. 
For every $z\in I$ and every $y\in  Y' \cap N_{G'}^r(z)$, let $P_{z,y}$ be a $(z,y)$-path of length $r$.
For every $z \in I$, let the \defn{type of $z$} be the most common type among the paths in the set $\set{P_{z,y} : y\in  Y' \cap N_{G'}^r(z) }$ (breaking ties arbitrarily). 
Let $T$ be the most common type among the vertices in $I$ (again breaking ties arbitrarily).
    Let $I'\subseteq I$ be the set of vertices of type $T$, and let $Y''$ be the set of vertices in $Y'$ that can be reached by paths of type $T$ from vertices in $I'$. 
    Let $G''$ be the graph induced by the union of the $(I',Y'')$-paths of type $T$. We define the \defn{0th level} as $L_0 = I'$ and the \defn{ith level} as $L_i = N^i_{G''}(I')$ for $i \in [r]$. For $v \in G''$, let $l(v)$ be the integer $i$ such that $v \in L_i$.

\sta{\label{I' properties} 
    $|I'| \geq \frac{1}{c^{r+1}} |I| \geq t\tau ^R$, and
    for every $z \in I'$,
    $\alpha( N_{G''}^r(z) \cap Y'') \geq \frac{1}{c^{r+1}} \cdot \frac{\alpha(Y)}{2C}.$}

The first assertion is immediate from the definition of $I'$ and since there are at most $c^{r+1}$ distinct possible types.
The second assertion follows from \eqref{claim that I still has paths to Y'} and the fact that $z$ is of type $T$.
This proves \eqref{I' properties}.

\hfill \break
\sta{\label{claim bipartite} $G''$ is bipartite and, therefore, $G''$ is $K_{t,t}$-subgraph-free.}
By construction, the levels are disjoint, and every edge has its ends in two consecutive levels (since otherwise we would be able to find a path of length less than $r$ between $I'$ and $Y''$). It is sufficient to show that each level $L_i$ is an independent set, as in this case, the set of even levels and the set of odd levels form a bipartition of $G''$.
Since $\dist(I',Y'')=r$ and every vertex in $G''$ is in an $(I',Y'')$-path of length $r$ of type $T$, it follows that for every $i$ and for every vertex $v\in L_i$, there exists an $(I',v)$-path of length $i$ that is an initial segment of a path of type $T$. Therefore, all the vertices of $L_i$ belong to the same color class, which proves the first assertion of \eqref{claim bipartite}. The second assertion follows since every bipartite graph in $\class$ is $K_{t,t}$-subgraph-free.
\\

    For $D \geq 0$, let  $\nu_D = \frac{2Cc^{r+1}(R+1)}{D+1}$ for any $D\geq 0$. Note that $\nu= \nu_0$.

    \begin{claim}\label{No small hitting sets} 
    For every $D$ such that $0\leq D \le R$, the following holds:
    Let $G^{\star}$ be an induced subgraph of $G''$, and define $I^{\star} = G^{\star}\cap I'$ and $Y^{\star}= G^{\star}\cap Y''$. Assume that $|I^{\star}| \geq t\tau^D$ and that for each $z\in I^{\star}$ we have $\alpha(N_{G^{\star}}^r(z)\cap Y^{\star}) \geq \frac{\alpha(Y)}{\nu_D}$. Then there exist $S\subseteq I^{\star}$ with $|S|=t$, and disjoint induced connected subgraphs $J_1,\dots, J_D \subseteq G^{\star}$, such that $|J_i|\leq tr $ and  $S\subseteq N_{G''}(J_i)$ for all $i \in [D]$.
    \end{claim}
    \begin{claimproof}
        We proceed by induction on $D$. When $D=0$, the statement is vacuously true. Now, assume inductively that the result holds for $1, \dots, D-1$. 

        By taking a subset if necessary, we may assume that $|I^\star|=t\tau^D$. Let $H$ be a hypergraph with vertex set $Y^{\star}$ and hyperedge set $I^{\star}$, where for every $z\in I^{\star}$, the hyperedge $e_z$ is defined as $\{y\in Y^{\star} : y \in N^r_{G^{\star}}(z)\}$. See \cref{fig: Hypergraph Construction}
        for a visualization.
        \begin{figure}[ht]
        \centering
        \scalebox{1}{\begin{tikzpicture}
	\begin{pgfonlayer}{nodelayer}
		\node [style=none] (0) at (-4.75, -2) {};
		\node [style=none] (1) at (-4.75, -4) {};
		\node [style=none] (2) at (4.25, -4) {};
		\node [style=none] (3) at (4.25, -2) {};
		\node [style=none] (4) at (-4.75, 4.25) {};
		\node [style=none] (5) at (-4.75, 2.25) {};
		\node [style=none] (6) at (4, 2.25) {};
		\node [style=none] (7) at (4, 4.25) {};
		\node [style=none] (8) at (6.25, 3.25) {$Y^\star$};
		\node [style=none] (9) at (6.25, -3) {$I^\star$};
		\node [style=vertex] (10) at (-2.25, -3) {};
		\node [style=vertex] (11) at (-3.75, 3.25) {};
		\node [style=vertex] (12) at (-2.5, 3.25) {};
		\node [style=vertex] (13) at (-1.25, 3.25) {};
		\node [style=none] (14) at (-3.25, 1) {};
		\node [style=none] (15) at (-2, 1) {};
		\node [style=none] (16) at (-1, -0.75) {};
		\node [style=none] (17) at (-4, 3.75) {};
		\node [style=none] (18) at (-4, 2.75) {};
		\node [style=none] (19) at (0, 2.75) {};
		\node [style=none] (20) at (0, 3.75) {};
		\node [style=none] (21) at (-5, 3.25) {$e_z$};
		\node [style=none] (22) at (-3, -3) {$z$};
		\node [style=vertex] (23) at (1.75, -3) {};
		\node [style=vertex] (24) at (1.5, 3.25) {};
		\node [style=vertex] (25) at (2.75, 3.25) {};
		\node [style=none] (26) at (2.25, 0.25) {};
		\node [style=none] (27) at (1.75, -3) {};
		\node [style=none] (28) at (0, 1) {};
		\node [style=none] (29) at (4, 3.25) {$e_{z'}$};
		\node [style=none] (30) at (2.5, -3) {$z'$};
		\node [style=none] (31) at (-1.25, 3.75) {};
		\node [style=none] (32) at (-1.25, 2.75) {};
		\node [style=none] (33) at (2.75, 2.75) {};
		\node [style=none] (34) at (2.75, 3.75) {};
		\node [style=vertex] (35) at (0, 3.25) {};
	\end{pgfonlayer}
	\begin{pgfonlayer}{edgelayer}
		\draw [style=simple] (3.center)
			 to (0.center)
			 to [in=-180, out=180, looseness=2.00] (1.center)
			 to (2.center)
			 to [in=360, out=0, looseness=2.00] cycle;
		\draw [style=simple] (5.center)
			 to (6.center)
			 to [in=360, out=0, looseness=2.00] (7.center)
			 to (4.center)
			 to [in=-180, out=180, looseness=2.00] cycle;
		\draw (10) to (16.center);
		\draw (16.center) to (15.center);
		\draw (14.center) to (10);
		\draw [style=grey fill] (20.center)
			 to (17.center)
			 to [in=-180, out=180, looseness=2.00] (18.center)
			 to (19.center)
			 to [in=360, out=0, looseness=2.00] cycle;
		\draw (26.center) to (27.center);
		\draw [style=grey fill] (31.center)
			 to [in=-180, out=180, looseness=2.00] (32.center)
			 to (33.center)
			 to [bend right=90, looseness=2.00] (34.center)
			 to cycle;
		\draw (24) to (26.center);
		\draw (28.center) to (27.center);
		\draw (28.center) to (13);
		\draw (16.center) to (28.center);
		\draw [style=simple] (11) to (14.center);
		\draw [style=simple] (12) to (15.center);
		\draw [style=simple] (25) to (26.center);
		\draw [style=simple] (15.center) to (13);
		\draw [style=simple] (35) to (28.center);
	\end{pgfonlayer}
\end{tikzpicture}}
        \caption{Construction of the hypergraph $H$.}
        \label{fig: Hypergraph Construction}
        \end{figure}
        The proof proceeds by using \cref{hypergraphs hitting packing} to find a vertex in $Y^\star$ that is contained in the $r$-neighborhood of many vertices of $I^\star$.
        We first show that $\nu(H)$ and $\lambda(H)$ are bounded by functions of $t$ and $\nu_D$. 
    
        \sta{\label{No large matching}  It holds that $\nu(H) \leq \nu_D$.}

        Suppose not, then there are more than $\nu_D$ pairwise disjoint subsets of $Y^{\star}$, each with at least $\frac{\alpha(Y)}{\nu_D}$ vertices. Therefore
        $$|Y^{\star}| > \frac{\alpha(Y)}{\nu_D} \cdot \nu_D = \alpha(Y).$$
       Since $Y^{\star}$ is an independent set contained in $Y$, this is a contradiction. This proves \eqref{No large matching}.

        \sta{\label{No large lambda}  It holds that $\lambda(H) < 2t$.} 
         Suppose that $\lambda(H) \geq 2t$.
        We will obtain a contradiction by constructing a $1$-subdivision of  $K_{2t}$, from which we obtain a $K_{t,t}$ induced minor. 
        Let $z_1,\dots,z_{2t}$ be vertices in $I'$ such that the hyperedges $e_{z_1},\dots,e_{z_{2t}}$ exhibit $\lambda(H)\geq 2t$. For every $i<j \in[2t]$, let $y_{i,j}$ be a vertex in  $(e_{z_i}\cap e_{z_j})\sm \bigcup_{k\neq i,j} e_{z_k}$. We call $y_{i,j}$ the \defn{private $r$-neighbor} of $z_i$ and $z_j$. Let $P_{i,j}$ and $P_{j,i}$ be paths of length $r$ from $z_i$ to $y_{i,j}$ and from $z_j$ to $y_{i,j}$, respectively, such that the union $|P_{i,j}\cup P_{j,i}|$ is minimal. Let $s_{i,j}$ be the vertex in $P_{i,j}\cap P_{j,i}$ with $l(s_{i,j})$ minimum. Such a vertex exists because $y_{i,j} \in P_{i,j}\cap P_{j,i}$. 
        We define $P_{i,j}' = z_i \dd P_{i,j} \dd s_{i,j} \sm s_{i,j}$ and $R_{i} = \bigcup_{j\neq i}P_{i,j}'$. 
    
        \sta{If $i\neq j$, then $R_i$ and $R_j$ are disjoint and anticomplete.\label{claim: Ri anticomplete}}
        Suppose not, then since $|R_i \cup R_j|\geq 2$ and since $R_i$ and $R_j$ are each connected subgraphs, 
        there is an edge $e = u_iu_j$ with $u_i\in P_{i,k}' \subseteq R_i$ and $u_j \in P_{j,l}' \subseteq R_j$ for some $k,l$. See Figure~\ref{fig: Ri and Rj disjoint dotted} for a visualization. 

        Since for every edge $e$ the ends of $e$ belong to two consecutive levels, without loss of generality, $u_i \in L_{m}$ and $u_j\in L_{m-1}$ for some $m \in [r]$.
        Now, $z_j \dd P_{j,l}\dd u_j \dd u_i\dd P_{i,k}\dd y_{i,k} $ is a path of length $r$ from $z_j$ to $y_{i,k}$. Since $y_{i,k}$ is the private $r$-neighbor of $z_i$ and $z_k$, it follows that $k=j$.
        Then $u_i$ is a vertex in the intersection of a $(z_j,y_{i,j})$-path and $P_{i,j}$. Since $u_i \in P_{i,j}'$,  $l(u_i) <l(s_{i,j})$,
        contradicting the minimality of $s_{i,j}$. This proves \eqref{claim: Ri anticomplete}.
    
        \begin{figure}[ht]
        \centering
        \scalebox{1}{\begin{tikzpicture}
	\begin{pgfonlayer}{nodelayer}
		\node [style=none] (27) at (-2.75, 5.25) {};
		\node [style=none] (28) at (-2.75, 5.25) {$y_{i,k}$};
		\node [style=vertex] (29) at (-2, 5.25) {};
		\node [style=none] (30) at (2.75, 5.25) {$y_{j,l}$};
		\node [style=none] (31) at (-1.25, 1.25) {$u_i$};
		\node [style=none] (32) at (0.75, -2.5) {$z_j$};
		\node [style=none] (33) at (-1, -2.5) {$z_i$};
		\node [style=none] (34) at (-4, -2.5) {$z_k$};
		\node [style=none] (35) at (-2.75, 2.5) {$s_{i,k}$};
		\node [style=vertex] (36) at (-1.5, -2.25) {};
		\node [style=vertex] (37) at (-3.5, -2.25) {};
		\node [style=vertex] (38) at (-2, 2.5) {};
		\node [style=vertex] (39) at (2, 5.25) {};
		\node [style=vertex] (40) at (2, 2.5) {};
		\node [style=vertex] (41) at (1.25, -2.25) {};
		\node [style=tiny dot] (42) at (-2.475, 1) {};
		\node [style=tiny dot] (44) at (-1.65, -0.75) {};
		\node [style=vertex] (45) at (-1.825, 1) {};
		\node [style=tiny dot] (47) at (-3.025, -0.75) {};
		\node [style=vertex] (48) at (1.45, -1) {};
		\node [style=tiny dot] (49) at (2, 3.75) {};
		\node [style=tiny dot] (50) at (-2, 3.75) {};
		\node [style=tiny dot] (51) at (3.075, -0.875) {};
		\node [style=tiny dot] (52) at (2.475, 1) {};
		\node [style=tiny dot] (54) at (1.775, 1) {};
		\node [style=none] (69) at (-3.5, 5.9) {};
		\node [style=none] (70) at (-3.5, 4.65) {};
		\node [style=none] (71) at (3.5, 4.65) {};
		\node [style=none] (72) at (3.5, 5.9) {};
		\node [style=none] (73) at (-4, -1.75) {};
		\node [style=none] (74) at (-4, -3) {};
		\node [style=none] (75) at (4, -3) {};
		\node [style=none] (76) at (4, -1.75) {};
		\node [style=none] (77) at (5.25, 5.35) {$Y^\star$};
		\node [style=none] (78) at (5.75, -2.375) {$I^\star$};
		\node [style=vertex] (79) at (3.5, -2.25) {};
		\node [style=none] (80) at (4, -2.5) {$z_l$};
		\node [style=none] (81) at (1.175, -0.375) {$u_j$};
		\node [style=none] (82) at (2.75, 2.5) {$s_{j,l}$};
		\node [style=none] (86) at (-0.75, -1) {$P_{i,k}'$};
		\node [style=none] (87) at (1, 1.5) {$P_{j,l}'$};
	\end{pgfonlayer}
	\begin{pgfonlayer}{edgelayer}
		\draw [style=simple, in=90, out=-90, looseness=1.25] (29) to (38);
		\draw [style=simple] (38) to (36);
		\draw [style=simple] (40) to (41);
		\draw [style=simple] (37) to (38);
		\draw [style=simple] (40) to (39);
		\draw [style=simple] (71.center)
			 to (70.center)
			 to [bend left=90, looseness=2.50] (69.center)
			 to (72.center)
			 to [bend left=90, looseness=2.50] cycle;
		\draw [style=simple] (75.center) to (74.center);
		\draw [style=simple, bend left=90, looseness=2.50] (74.center) to (73.center);
		\draw [style=simple] (73.center) to (76.center);
		\draw [style=simple, bend left=90, looseness=2.50] (76.center) to (75.center);
		\draw [style=simple] (40) to (79);
		\draw [style=simple] (45) to (48);
	\end{pgfonlayer}
\end{tikzpicture}}
        \caption{Visualization of the proof of \eqref{claim: Ri anticomplete}.}
        \label{fig: Ri and Rj disjoint dotted}
        \end{figure}

        \sta{If $k\notin \set{i,j}$, then for every $l$, $P_{k,l}$ is anticomplete to $s_{i,j}$. \label{claim: sij does not see stuff}}
    
       Suppose that $s_{i,j}$ is adjacent to a vertex $u\in P_{k,l}$. 
       Then $|l(u)-l(s_{i,j})| =1$.
        First, suppose that $s_{i,j} \in L_{m-1}$ and $u \in L_m$ for some $m \in [r]$. See the left-hand side of Figure~\ref{fig: Dotted path Pkl not adjacent to sij}. Then $z_i\dd P_{i,j} \dd s_{i,j}$ and $z_j\dd P_{j,i} \dd s_{i,j}$ are paths of length $m-1$ and $u\dd P_{k,l} \dd y_{k,l}$ is a path of length $r-m$. Since $s_{i,j}$ and $u$ are adjacent, we have found a $(z_i, y_{k,l})$-path and a $(z_j,y_{k,l})$-path of length $r$. Since $y_{k,l}$ is the private $r$-neighbor of $z_k$ and $z_l$, we conclude that $k\in\set{i,j}$.
    
        Now suppose $s_{i,j} \in L_{m}$ and $u \in L_{m-1}$. See the right-hand side of Figure~\ref{fig: Dotted path Pkl not adjacent to sij}. Then  
        $z_k \dd P_{k,l} \dd u$ is a path of length $m-1$ and $s_{i,j}\dd P_{i,j} \dd y_{i,j}$ is a path of length $r-m$. Since $u$ and $s_{i,j}$ are adjacent, $z_k \dd P_{k,l} \dd u \dd s_{i,j}\dd P_{i,j} \dd y_{i,j}$ is a path of length $r$ from $z_k$ to $y_{i,j}$, which implies that $y_{i,j} \in e_{z_k}$. However, $y_{i,j}$ is the private $r$-neighbor of $z_i$ and $z_j$, so we conclude that $k\in\set{i,j}$. This proves \eqref{claim: sij does not see stuff}.
        \\
    
        \begin{figure}[ht]
        \centering
        \scalebox{1}{\begin{tikzpicture}
	\begin{pgfonlayer}{nodelayer}
		\node [style=none] (1) at (-7.5, 4.25) {$y_{i,j}$};
		\node [style=vertex] (2) at (-6.75, 4.25) {};
		\node [style=none] (3) at (-2.5, 4.25) {$y_{k,l}$};
		\node [style=none] (4) at (-2.75, 2.25) {$u$};
		\node [style=none] (5) at (-2.75, -3) {$z_k$};
		\node [style=none] (6) at (-5.25, -3) {$z_j$};
		\node [style=none] (7) at (-8.25, -3) {$z_i$};
		\node [style=none] (8) at (-7.5, 0.25) {$s_{i,j}$};
		\node [style=vertex] (9) at (-5.75, -2.75) {};
		\node [style=vertex] (10) at (-7.75, -2.75) {};
		\node [style=vertex] (11) at (-6.75, 0.25) {};
		\node [style=vertex] (12) at (-3.25, 4.25) {};
		\node [style=vertex] (13) at (-3.25, 2.25) {};
		\node [style=vertex] (14) at (-3.25, -2.75) {};
		\node [style=tiny dot] (15) at (-7.425, -1.75) {};
		\node [style=tiny dot] (16) at (-7.1, -0.75) {};
		\node [style=tiny dot] (17) at (-6.075, -1.75) {};
		\node [style=tiny dot] (18) at (-6.425, -0.75) {};
		\node [style=tiny dot] (19) at (-3.25, -1.25) {};
		\node [style=tiny dot] (20) at (-3.25, -0.25) {};
		\node [style=tiny dot] (21) at (-3.25, 0.75) {};
		\node [style=tiny dot] (23) at (-6.75, 3.25) {};
		\node [style=tiny dot] (24) at (-6.75, 2.25) {};
		\node [style=tiny dot] (25) at (-6.75, 1.25) {};
		\node [style=tiny dot] (26) at (-3.25, 3.25) {};
		\node [style=none] (27) at (3.25, 4.25) {};
		\node [style=none] (28) at (3.25, 4.25) {$y_{i,j}$};
		\node [style=vertex] (29) at (4, 4.25) {};
		\node [style=none] (30) at (8.25, 4.25) {$y_{k,l}$};
		\node [style=none] (31) at (8, 0) {$u$};
		\node [style=none] (32) at (8, -2.75) {$z_k$};
		\node [style=none] (33) at (5.5, -2.75) {$z_j$};
		\node [style=none] (34) at (2.5, -2.75) {$z_i$};
		\node [style=none] (35) at (3.25, 1.5) {$s_{i,j}$};
		\node [style=vertex] (36) at (5, -2.75) {};
		\node [style=vertex] (37) at (3, -2.75) {};
		\node [style=vertex] (38) at (4, 2) {};
		\node [style=vertex] (39) at (7.5, 4.25) {};
		\node [style=vertex] (40) at (7.5, 0) {};
		\node [style=vertex] (41) at (7.5, -2.75) {};
		\node [style=tiny dot] (42) at (3.425, -0.75) {};
		\node [style=tiny dot] (43) at (3.625, 0.25) {};
		\node [style=tiny dot] (44) at (4.575, -0.75) {};
		\node [style=tiny dot] (45) at (4.375, 0.25) {};
		\node [style=tiny dot] (46) at (7.5, -1.75) {};
		\node [style=tiny dot] (47) at (3.2, -1.75) {};
		\node [style=tiny dot] (48) at (7.5, -1) {};
		\node [style=tiny dot] (49) at (7.5, 3.25) {};
		\node [style=tiny dot] (50) at (4, 3) {};
		\node [style=tiny dot] (51) at (7.5, 1.25) {};
		\node [style=tiny dot] (52) at (7.5, 2.25) {};
		\node [style=tiny dot] (54) at (4.8, -1.75) {};
		\node [style=none] (55) at (-8.5, 4.9) {};
		\node [style=none] (56) at (-8.5, 3.65) {};
		\node [style=none] (57) at (-1.5, 3.65) {};
		\node [style=none] (58) at (-1.5, 4.9) {};
		\node [style=none] (59) at (-8.5, -2.25) {};
		\node [style=none] (60) at (-8.5, -3.5) {};
		\node [style=none] (61) at (-1.5, -3.5) {};
		\node [style=none] (62) at (-1.5, -2.25) {};
		\node [style=none] (64) at (-10.25, 4.425) {$Y^\star$};
		\node [style=none] (68) at (-10.25, -2.825) {$I^\star$};
		\node [style=none] (69) at (2.5, 4.9) {};
		\node [style=none] (70) at (2.5, 3.65) {};
		\node [style=none] (71) at (9.5, 3.65) {};
		\node [style=none] (72) at (9.5, 4.9) {};
		\node [style=none] (73) at (2.5, -2.25) {};
		\node [style=none] (74) at (2.5, -3.5) {};
		\node [style=none] (75) at (9.5, -3.5) {};
		\node [style=none] (76) at (9.5, -2.25) {};
		\node [style=none] (77) at (11.25, 4.375) {$Y^\star$};
		\node [style=none] (78) at (11.25, -2.875) {$I^\star$};
		\node [style=none] (80) at (-2, -0.5) {$P_{k,l}$};
		\node [style=none] (81) at (9, 2) {$P_{k,l}$};
		\node [style=none] (82) at (2.25, -0.5) {$P_{i,j}$};
		\node [style=none] (83) at (-8, 2) {$P_{i,j}$};
	\end{pgfonlayer}
	\begin{pgfonlayer}{edgelayer}
		\draw [style=simple, in=90, out=-90, looseness=1.25] (2) to (11);
		\draw [style=simple] (11) to (9);
		\draw [style=simple] (13) to (14);
		\draw [style=simple] (10) to (11);
		\draw [style=simple] (11) to (13);
		\draw [style=simple] (13) to (12);
		\draw [in=90, out=-90, looseness=1.25] (29) to (38);
		\draw [style=simple] (38) to (36);
		\draw (40) to (41);
		\draw [style=simple] (37) to (38);
		\draw (38) to (40);
		\draw [style=simple] (40) to (39);
		\draw [style=simple] (57.center)
			 to (56.center)
			 to [bend left=90, looseness=2.50] (55.center)
			 to (58.center)
			 to [bend left=90, looseness=2.50] cycle;
		\draw [style=simple] (61.center)
			 to (60.center)
			 to [bend left=90, looseness=2.50] (59.center)
			 to (62.center)
			 to [bend left=90, looseness=2.50] cycle;
		\draw [style=simple] (71.center)
			 to (70.center)
			 to [bend left=90, looseness=2.50] (69.center)
			 to (72.center)
			 to [bend left=90, looseness=2.50] cycle;
		\draw [style=simple] (75.center) to (74.center);
		\draw [style=simple, bend left=90, looseness=2.50] (74.center) to (73.center);
		\draw [style=simple] (73.center) to (76.center);
		\draw [style=simple, bend left=90, looseness=2.50] (76.center) to (75.center);
	\end{pgfonlayer}
\end{tikzpicture}}
        \caption{Visualization of the proof of \eqref{claim: sij does not see stuff}.}
        \label{fig: Dotted path Pkl not adjacent to sij}
    \end{figure}

    We now complete the proof of \eqref{No large lambda}.
        Consider the graph $\Gamma$ obtained by deleting every vertex that is not in $(\bigcup_{i\in [2t]} R_i) \cup (\bigcup_{i<j \in[2t]}s_{i,j})$ and contracting each $R_i$ to a single vertex $r_i$. Then $\Gamma$ is an induced minor of $G$. 
        By \eqref{claim: Ri anticomplete}, the set $\set{r_i}_{i\leq2t}$ is stable.
        In addition, \eqref{claim: sij does not see stuff} implies that if $k\notin \set{i,j}$, then $s_{i,j}$ is not adjacent to any vertex in $R_k$. Therefore, in $\Gamma$, $s_{i,j}$ is adjacent to $r_k$ if and only if $k\in \{i,j\}$.  
        It also follows from \eqref{claim: sij does not see stuff} that if $\set{k,l} \neq \set{i,j}$, then $s_{i,j}$ and $s_{k,l}$ are not adjacent, so the set $\bigcup_{i<j \in[2t]}s_{i,j}$ is stable. 
        Therefore, $\Gamma$ is a $1$-subdivision of a $K_{2t}$, which contains a $K_{t,t}$ induced minor, a contradiction. This completes the  proof of  \eqref{No large lambda}.\\

        By \eqref{No large matching}, $\nu(H) \leq \nu_D$ and by \eqref{No large lambda}, $\lambda (H)\leq 2t$. Note that $\nu_D \leq \nu$. By \cref{hypergraphs hitting packing}, we have
        \begin{equation*}
            \tau(H) \leq 11(2t)^2\left(2t+ \nu_D +3 \right)\binom{2t + \nu_D}{\nu_D}^2  \leq  \tau.
        \end{equation*}
        Let $\mathcal{T} \subseteq Y^{\star}$ be a hitting set of minimum size. Since $|\mathcal{T}| \leq \tau$, there exists a vertex $x \in \mathcal{T}$ such that the set 
        $\tilde{I}= \set{z\in I^{\star} : x\in e_z}$ has size $|\tilde{I}| \geq \frac{1}{\tau}|I^{\star}|$.
        For each $z\in \tilde{I}$, fix a $(z,x)$-path $Q_{z,x}$ of length $r$. Then set $\tilde{J}_D = \bigcup_{z\in \tilde{I}\ }(Q_{z,x} \sm {z} ) $. 
        
        Let $\tilde{G} = \left(G^{\star}\sm\left(\tilde{J}_D\cup I^{\star}\right)\right)\cup \tilde{I}$. We now check that, after deleting $\tilde{J}_D$, every vertex in $\tilde{I}$ still reaches a sufficiently large proportion of $Y^\star$ via paths of length $r$, which will allow us to apply induction.

        \sta{\label{inductive z neighborhood in Y} For every vertex $z\in \tilde{I}$, we have $|N^{r}_{\tilde{G}}(z) \cap Y^\star| \geq  \frac{\alpha (Y)}{\nu_{D-1}}$.}
        Let  $z\in \tilde{I}$ and define $\tilde{Y}_z = (N_{G^{\star}}^{r}(z) \cap Y^\star) \sm (N_{G^{\star}}^{r-1}(\tilde{J}_D \cap L_1) \cap Y^\star)$. We show that $\tilde{Y}_z\subseteq N^{r}_{\tilde{G}}(z) \cap Y^\star$.
        Let $y \in \tilde{Y}_z$, then there is a $(z,y)$-path $P_{z,y}$ in $G^{\star}$ of length $r$. We show that every such path is vertex-disjoint from $\tilde{J}_D$.
\begin{figure}[ht]
        \centering
        \scalebox{1}{\begin{tikzpicture}
	\begin{pgfonlayer}{nodelayer}
		\node [style=none] (35) at (-4.25, 5.45) {};
		\node [style=none] (36) at (-4.25, 4.55) {};
		\node [style=none] (37) at (-1.75, 5.45) {};
		\node [style=none] (38) at (-1.75, 4.55) {};
		\node [style=none] (0) at (-4.25, 5.75) {};
		\node [style=none] (1) at (-4.25, 4.25) {};
		\node [style=none] (2) at (5.25, 5.75) {};
		\node [style=none] (3) at (5.25, 4.25) {};
		\node [style=none] (4) at (-4.25, -3.25) {};
		\node [style=none] (5) at (-4.25, -5) {};
		\node [style=none] (6) at (5.25, -3.25) {};
		\node [style=none] (7) at (5.25, -5) {};
		\node [style=none] (8) at (7.75, 5) {$Y^\star \subseteq L_r$};
		\node [style=none] (9) at (7.75, -4) {$I^{\star} \subseteq L_0$};
		\node [style=none] (10) at (-2.5, -3.575) {};
		\node [style=none] (11) at (-2.5, -4.625) {};
		\node [style=none] (12) at (3.5, -3.575) {};
		\node [style=none] (13) at (3.5, -4.625) {};
		\node [style=vertex] (15) at (2.75, -4.075) {};
		\node [style=vertex] (16) at (0.75, -4.075) {};
		\node [style=vertex] (17) at (-1, -4.075) {};
		\node [style=tiny dot] (23) at (-4, 5) {};
		\node [style=tiny dot] (24) at (-3, 5) {};
		\node [style=tiny dot] (25) at (-2, 5) {};
		\node [style=vertex] (28) at (0.75, 5) {};
		\node [style=none] (31) at (-0.475, 3.675) {$\tilde{J}_D$};
		\node [style=none] (39) at (-4.25, -0.5) {};
		\node [style=none] (40) at (-4.25, -2.25) {};
		\node [style=none] (41) at (5.25, -0.5) {};
		\node [style=none] (42) at (5.25, -2.25) {};
		\node [style=none] (47) at (-2, -4.075) {$\tilde{I}$};
		\node [style=vertex] (54) at (1.775, -1.375) {};
		\node [style=vertex] (55) at (-1.275, -1.35) {};
		\node [style=vertex] (56) at (0.25, -1.35) {};
		\node [style=none] (57) at (7.75, -1.25) {$L_1$};
		\node [style=none] (58) at (-4.25, -0.85) {};
		\node [style=none] (59) at (-4.25, -1.9) {};
		\node [style=none] (60) at (2, -0.85) {};
		\node [style=none] (61) at (2, -1.9) {};
		\node [style=none] (64) at (-3.25, -1.35) {$\tilde{J}_D \cap L_1$};
		\node [style=none] (66) at (1.25, -4) {$z'$};
		\node [style=none] (67) at (3.25, -4.1) {$z$};
		\node [style=none] (69) at (1, 3) {$u$};
		\node [style=none] (71) at (-2, 0.75) {};
		\node [style=none] (72) at (-2.25, 6.325) {$N^{r-1}_{G\star}(\tilde{J}_D \cap L_1)$};
		\node [style=none] (73) at (4, 5) {};
		\node [style=none] (74) at (4, 5) {$y$};
		\node [style=vertex] (75) at (3.5, 5) {};
		\node [style=vertex] (76) at (3.5, 5) {};
		\node [style=vertex] (77) at (0.75, 2.25) {};
		\node [style=none] (78) at (0.75, 1.5) {};
		\node [style=none] (79) at (3.5, 0.75) {};
		\node [style=none] (80) at (3.5, 0.75) {$P_{z,y}$};
		\node [style=none] (81) at (1.25, 5) {$x$};
		\node [style=none] (82) at (-4.625, 6.25) {};
		\node [style=none] (83) at (-4.425, 5) {};
		\node [style=none] (84) at (0.775, -1.375) {$v$};
	\end{pgfonlayer}
	\begin{pgfonlayer}{edgelayer}
		\draw [style=simple] (0.center)
			 to (2.center)
			 to [bend left=90, looseness=2.25] (3.center)
			 to [in=360, out=180] (1.center)
			 to [bend left=90, looseness=2.25] cycle;
		\draw [style=simple] (4.center)
			 to (6.center)
			 to [bend left=90, looseness=2.25] (7.center)
			 to (5.center)
			 to [bend left=90, looseness=2.25] cycle;
		\draw [style=grey fill] (12.center)
			 to [bend left=90, looseness=2.25] (13.center)
			 to (11.center)
			 to [bend left=90, looseness=2.25] (10.center)
			 to cycle;
		\draw [style=grey fill] (35.center)
			 to (37.center)
			 to [bend left=90, looseness=2.25] (38.center)
			 to (36.center)
			 to [bend left=90, looseness=2.25] cycle;
		\draw [style=simple] (41.center)
			 to [bend left=90, looseness=2.25] (42.center)
			 to (40.center)
			 to [bend left=90, looseness=2.25] (39.center)
			 to cycle;
		\draw [style=grey fill] (61.center)
			 to (59.center)
			 to [bend left=90, looseness=2.25] (58.center)
			 to (60.center)
			 to [bend left=90, looseness=2.25] cycle;
		\draw [style=gray edge] (71.center) to (55);
		\draw [style=gray edge] (71.center) to (23);
		\draw [style=gray edge] (56) to (24);
		\draw [style=gray edge] (25) to (54);
		\draw [style=simple] (17) to (55);
		\draw [style=simple] (55) to (28);
		\draw [style=simple] (56) to (16);
		\draw [style=simple] (28) to (78.center);
		\draw [style=simple] (78.center) to (56);
		\draw [style=simple] (78.center) to (54);
		\draw [style=simple] (54) to (15);
		\draw [style=simple, bend left=45] (77) to (15);
		\draw [style=simple] (76) to (77);
		\draw [style=diredge, bend right=60] (82.center) to (83.center);
	\end{pgfonlayer}
\end{tikzpicture}}
        \caption{Visualization of the proof of \eqref{inductive z neighborhood in Y}.}
        \label{fig: Vertices have large neighborhoods}
\end{figure}
        Suppose, for contradiction, that there exists $u\in P_{z,y} \cap \tilde{J}_D$ with $l(u)=m$ for some $m$.
        Since $u \in \tilde{J}_D$, there exist  $z'\in \tilde{I}$ (not necessarily distinct from $z$), and a path $P_{z',x} \subseteq \tilde{J}_D$  such that $u \in P_{z',x}$. Let $v$ denote the vertex in $P_{z',x} \cap L_1$. See Figure ~\ref{fig: Vertices have large neighborhoods} for a visualization. Then $v\dd P_{z',x}\dd u\dd P_{z,y}\dd y$ is a path of length $r-1$ from $v \in \tilde{J}_D \cap L_1$ to $y$. This implies $y \in N^{r-1}_{G^\star}(\tilde{J}_D \cap L_1)\cap Y^\star$, contrary to the assumption.

        Therefore, the vertices in $\tilde{Y}_z$ can only be reached from $z$ via paths of length $r$ that are vertex-disjoint from $\tilde{J}_D$, which means that these paths are entirely contained in $\tilde{G}$.
        We conclude that $\tilde{Y}_z \subseteq N_{\tilde{G}}^r(z) \cap Y^\star$ and 
        \begin{align*}
            |N_{\tilde{G}}^r(z) \cap Y^\star| &\geq |N_{G^{\star}}^{r}(z) \cap Y^\star| - |N_{G^{\star}}^{r-1}(\tilde{J}_D \cap L_1) \cap Y^\star| \\
            & \geq \frac{\alpha(Y)}{\nu_D} - |I^{\star}| \cdot \frac{\alpha(Y)}{2\iota C(R+1)}\\
            &\geq \frac{\alpha(Y) (D+1)}{2c^{r+1}C(R+1)} - \frac{\alpha(Y)}{2c^{r+1}C(R+1)}  \\
            &\geq \frac{\alpha(Y)}{\nu_{D-1}},
        \end{align*}
        where the second inequality follows from \eqref{the lemma claim cleaned r-1} and the third inequality uses the fact 
        $\frac{|I^\star|}{\iota} \leq c^{-(r+1)}$.
        This proves \eqref{inductive z neighborhood in Y}.\\

    Observe that $\tilde{G}$ is an induced subgraph of $G^{\star}$ where $\tilde{G} \cap I' = \tilde{I}$ and $\tilde{G} \cap Y'' = Y^{\star} \sm \{x\}$. 
    By construction $|\tilde{I}| \geq \frac{1}{\tau} |I^{\star}| \geq t\tau^{D-1}$ and by \eqref{inductive z neighborhood in Y} for each $z\in \tilde{I}$ we have $\alpha(N_{\tilde{G}}^{r}(z) \cap (Y^{\star} \sm \{x\})) \geq \frac{\alpha(Y)}{\nu_{D-1}}$. Hence, by the inductive hypothesis, there exists  a set $S \subseteq \tilde{I}$ and disjoint induced subgraphs $J_1, \dots, J_{D-1}$ of $\tilde{G}$ such that 
    \begin{itemize}
        \item $|S| = t$,
        \item $|J_i| \leq r t$ for all $i\in [D-1]$, 
        \item  and $S \subseteq N_{G''}(J_i)$.
    \end{itemize}
    
    Recalling that $S \subseteq I^{\star}$, we let $J_D$ be the subgraph of $\tilde{J}_D$ induced by $\bigcup_{z\in S\ } (Q_{z,x}\sm \{z\})$. Now $|J_D| \leq rt$ and $S \subseteq N_{G''}(J_D)$. Moreover,  for each $i\leq D-1$, $J_i$ is vertex-disjoint from $J_D$ because it is an induced subgraph of $\tilde{G}$.
    We conclude that the set $S$ and $J_1, \dots, J_D$ satisfy the required properties and Claim~\ref{No small hitting sets} is proven.     
    \end{claimproof}

\begin{figure}[ht]
        \centering
        \scalebox{1}{\begin{tikzpicture}
	\begin{pgfonlayer}{nodelayer}
		\node [style=none] (0) at (-3.25, 4.75) {};
		\node [style=none] (1) at (-3.25, 2.75) {};
		\node [style=none] (2) at (4.75, 4.75) {};
		\node [style=none] (3) at (4.75, 2.75) {};
		\node [style=none] (4) at (-3.25, -2.25) {};
		\node [style=none] (5) at (-3.25, -4.25) {};
		\node [style=none] (6) at (4.75, -2.25) {};
		\node [style=none] (7) at (4.75, -4.25) {};
		\node [style=none] (8) at (7.25, 3.75) {$Y''$};
		\node [style=none] (9) at (7.5, -3.25) {$I'$};
		\node [style=none] (10) at (-2.75, -2.75) {};
		\node [style=none] (11) at (-2.75, -3.75) {};
		\node [style=none] (12) at (1.5, -2.75) {};
		\node [style=none] (13) at (1.5, -3.75) {};
		\node [style=none] (14) at (-4, -3.25) {S};
		\node [style=vertex] (15) at (-2.5, -3.25) {};
		\node [style=vertex] (16) at (-1.25, -3.25) {};
		\node [style=vertex] (17) at (1.25, -3.25) {};
		\node [style=tiny dot] (18) at (-0.5, -3.25) {};
		\node [style=tiny dot] (19) at (0, -3.25) {};
		\node [style=tiny dot] (20) at (0.5, -3.25) {};
		\node [style=vertex] (21) at (-0.75, 3.75) {};
		\node [style=vertex] (22) at (3.75, 3.75) {};
		\node [style=tiny dot] (23) at (0.75, 3.75) {};
		\node [style=tiny dot] (24) at (1.25, 3.75) {};
		\node [style=tiny dot] (25) at (1.75, 3.75) {};
		\node [style=none] (26) at (2.5, 0.75) {};
		\node [style=none] (27) at (-2.5, 0.75) {};
		\node [style=vertex] (28) at (-2.75, 3.75) {};
		\node [style=none] (29) at (4, 1.75) {$J_R$};
		\node [style=none] (30) at (0.75, 1.75) {$J_2$};
		\node [style=none] (31) at (-3, 1.75) {$J_1$};
		\node [style=vertex] (32) at (3, -3.25) {};
		\node [style=vertex] (33) at (4, -3.25) {};
		\node [style=vertex] (34) at (5, -3.25) {};
	\end{pgfonlayer}
	\begin{pgfonlayer}{edgelayer}
		\draw [style=simple] (0.center)
			 to (2.center)
			 to [bend left=90, looseness=2.25] (3.center)
			 to (1.center)
			 to [bend left=90, looseness=2.25] cycle;
		\draw [style=simple] (4.center)
			 to (6.center)
			 to [bend left=90, looseness=2.25] (7.center)
			 to (5.center)
			 to [bend left=90, looseness=2.25] cycle;
		\draw [style=grey fill] (12.center)
			 to [bend left=90, looseness=2.25] (13.center)
			 to (11.center)
			 to [bend left=90, looseness=2.25] (10.center)
			 to cycle;
		\draw [style=simple] (28) to (27.center);
		\draw [style=simple] (27.center) to (15);
		\draw [style=simple] (27.center) to (16);
		\draw [style=gray edge] (21) to (16);
		\draw [style=light grey] (22) to (26.center);
		\draw [style=light grey] (26.center) to (17);
		\draw [style=light grey] (16) to (26.center);
		\draw [style=gray edge] (21) to (15);
		\draw [style=gray edge] (21) to (17);
		\draw [style=light grey] (22) to (15);
		\draw [style=simple] (28) to (17);
	\end{pgfonlayer}
\end{tikzpicture}}
        \caption{Visualization of the induced subgraphs $J_1,\dots, J_R$ given by Claim~\ref{No small hitting sets}.}
        \label{fig: Disjoint Induced Subgraphs}
\end{figure}

Recall from \eqref{I' properties} that 
\begin{equation*}
    |I'| \geq t\tau^R \quad \text{ and } \quad \alpha(N_{G''}^r(z) \cap Y'') \geq \frac{\alpha(Y) (R+1)}{2c^{r+1}C(R+1)} \geq \frac{\alpha(Y)}{\nu_R} \text{ for every } z\in I'.
\end{equation*} 
Therefore, we can apply Claim~\ref{No small hitting sets} to $G''$ to obtain a set $S \subseteq I'$ and disjoint induced subgraphs $J_1, \dots, J_R$ such that $|S|=t, |J_i|\leq rt$, and $S \subseteq N_{G''}(J_i)$ for all $i \in [R]$. See Figure~\ref{fig: Disjoint Induced Subgraphs} for a visualization. Let $\Lambda$ be a graph with vertex set $J_1, \dots, J_R$, where two distinct vertices $J_i$ and $J_j$ are adjacent if and only if $J_i$ and $J_j$ are not anticomplete in $G''$.

\sta{\label{No t pairwise disjoint sets} $\Lambda$ has no stable set of size $t$.}
For the sake of contradiction, suppose that $\Lambda$ has a stable set of size $t$. Then, reindexing if necessary, there are $t$ disjoint pairwise anticomplete induced subgraphs $J_1, \dots, J_t$ of $G''$. Let $\Gamma$ be the graph obtained by deleting all vertices except $\left(\bigcup_{i=1}^t J_i\right) \cup S$ and contracting all the edges in each $J_i$ to a vertex $j_i$. 
Since $J_i$ and $J_j$ are anticomplete for $i\neq j$, the set $\bigcup_{i \in [t]}\{j_i\}$ is stable, and since $S \subseteq N_{G''}(J_i)$, each $j_i$ is complete to $S$. 
Therefore, $\Gamma$ is an induced minor of $G$ isomorphic to $K_{t,t}$, a contradiction. This proves \eqref{No t pairwise disjoint sets}.
\\

By Theorem~\ref{thm:ramsey}, it follows that $\Lambda$ contains a clique of size $K$.
Reindexing if necessary, there are $K$ pairwise disjoint subsets $J_1, \dots, J_{K}$ in $G''$ such that there is at least one edge between any two sets. 
By Lemma~\ref{lem: disjointsubsets}, $G''$ contains a $K_{t,t}$ subgraph, which contradicts \eqref{claim bipartite}.   This completes the proof of \cref{thm: TI5 generalization alpha}.
\end{proof}

Next, we  provide an alternative formulation of \cref{thm: TI5 generalization alpha} in terms of 
cardinalities rather than independence numbers.

\begin{theorem} \label{thm: TI5 generalization size}
     There exists a function $M(C,s,t,r)$ that is polynomial in the parameters $s$ and $C$ such that the following holds: Let $r, s,t\in \N$, let $G\in \class$ with $\omega(G)<s$ and $C\geq 2$. For any $Y \subseteq V(G)$, there is a subset $X \subseteq V(G)$ with $|X| \leq M(C,s,t,r)$ such that if
    $$Z= \left\{z\in V(G)\sm X\  \colon |N^{r}_{G\sm X}[z]\cap Y| \geq \frac{|Y|}{C}\right\}$$
    then $\min(|Y|, |Z|) \leq M(C,s,t,r)$.  
\end{theorem}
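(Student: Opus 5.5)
We plan to derive this directly from \cref{thm: TI5 generalization alpha} by converting between independence numbers and cardinalities with \cref{obs: size bound by alpha and chromatic number} and \cref{lem: poly chi bound}. Let $c=c_t(s)$ be the polynomial bound on $\chi$ for graphs in $\class$ with clique number less than $s$. Every induced subgraph $H$ of $G$ is in $\class$ with $\omega(H)<s$, so $\alpha(H)\le |V(H)|\le c\,\alpha(H)$. The point is that $\alpha$ and $|\cdot|$ agree up to the polynomial factor $c$, so thresholds and bounds transfer with only a polynomial loss. If $M'$ denotes the function from \cref{thm: TI5 generalization alpha}, we will set
\[ M(C,s,t,r) = c\cdot M'(cC,s,t,r), \]
which is polynomial in $s$ and $C$ because $M'$ is polynomial in its first two arguments and $c$ is polynomial in $s$.

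The proof runs as follows. Given $Y$, apply \cref{thm: TI5 generalization alpha} with constant $cC$ (note $cC\ge 2$) to obtain $X$ with $\alpha(X)\le M'(cC,s,t,r)$. Then $|X|\le c\,\alpha(X)\le M(C,s,t,r)$. We also get
\[ Z' = \left\{z\in V(G)\sm X : \alpha(N^r_{G\sm X}[z]\cap Y)\ge \alpha(Y)/(cC)\right\} \]
with $\min(\alpha(Y),\alpha(Z'))\le M'(cC,s,t,r)$. Next we check that $Z\subseteq Z'$. If $z\in Z$, then
\[ \alpha(N^r_{G\sm X}[z]\cap Y)\ge \frac{|N^r_{G\sm X}[z]\cap Y|}{c}\ge \frac{|Y|}{cC}\ge \frac{\alpha(Y)}{cC}, \]
so $z\in Z'$. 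Finally, if $\alpha(Y)\le M'(cC,s,t,r)$ then $|Y|\le c\,\alpha(Y)\le M$. Otherwise $\alpha(Z')\le M'(cC,s,t,r)$, and then $|Z|\le |Z'|\le c\,\alpha(Z')\le M$. In either case $\min(|Y|,|Z|)\le M(C,s,t,r)$.

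The only step needing care is the direction of each inequality. Lower bounds on neighbourhood sizes must be converted to lower bounds on $\alpha$; this uses $|S|\le c\,\alpha(S)$ and is why the constant $C$ is inflated to $cC$. Upper bounds on $\alpha(X)$, $\alpha(Y)$ and $\alpha(Z')$ become cardinality bounds by the same inequality. The trivial inequality $\alpha(Y)\le|Y|$ handles the threshold comparison. Beyond this bookkeeping, no real obstacle is expected.
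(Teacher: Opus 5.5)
Your proposal is correct and follows the paper's proof essentially line for line: the same choice $M(C,s,t,r)=c\cdot M'(cC,s,t,r)$, the same inclusion $Z\subseteq Z'$ via $|S|\le \chi(S)\,\alpha(S)$ and polynomial $\chi$-boundedness, and the same conversion of the $\alpha$-bounds on $X$ and $Z'$ into cardinality bounds. Your case split on $\alpha(Y)$ also makes explicit the step $|Y|>M \Rightarrow \alpha(Y)>M'(cC,s,t,r)$, which the paper uses without stating it.
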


\begin{proof}
Let $M'(C,s,t,r)$ be the function given by Theorem~\ref{thm: TI5 generalization alpha}, which is polynomial in the parameters $s$ and $C$. By Lemma~\ref{lem: poly chi bound}, there is a polynomial $p_t$ such that $\chi(G) \leq p_t(s)$.
Set $M(C,s,t,r) = p_t(s) \cdot M'(Cp_t(s),s,t,r)$, which is polynomial in $s$ and $C$. We may assume that $|Y| > M(C,s,t,r) $, as otherwise there is nothing to show.

By Theorem~\ref{thm: TI5 generalization alpha}, there is a set $X \subseteq V(G)$ with $\alpha(X) \leq M'(Cp_t(s),s,t,r)$ such that if we define
$$Z_\alpha = \left\{z\in V(G)\sm X \colon \alpha(N^{r}_{G\sm X}[z]\cap Y) \geq \frac{\alpha(Y)}{p_t(s)C}\right\}$$
then $\min(\alpha(Z_\alpha), \alpha(Y) )\leq M'(Cp_t(s),s,t,r)$. Since $\alpha(Y )>M'(Cp_t(s),s,t,r)$, it follows that  $\alpha(Z_\alpha) \leq M'(Cp_t(s),s,t,r)$. 

Let 
$$Z = \left\{z\in V(G)\sm X \colon |N^{r}_{G\sm X}[z]\cap Y| \geq \frac{|Y|}{C}\right\}.$$

We show that $X$ and $Z$ satisfy the conclusion of Theorem~\ref{thm: TI5 generalization size}. Let $z\in Z$ then, by Observation~\ref{obs: size bound by alpha and chromatic number}, it holds that 
\begin{equation*}
   \chi(N^r_{G\sm X}[z] \cap Y)\cdot \alpha(N^r_{G\sm X}[z] \cap Y) \geq |N^r_{G\sm X}[z] \cap Y| \geq \frac{|Y|}{C}\geq \frac{\alpha(Y)}{C}.
\end{equation*}
Since $N^r_{G\sm X}[z] \cap Y$ is an induced subgraph of $G$, it has chromatic number at most $p_t(s)$. Rearranging the inequality yields 
\begin{equation*}
    \alpha(N^r_{G\sm X}[z] \cap Y) \geq \frac{\alpha(Y)}{p_t(s) C},
\end{equation*}
and we conclude that $Z \subseteq Z_\alpha$. 

By Observation~\ref{obs: size bound by alpha and chromatic number}, we conclude
$$|Z| \leq  |Z_\alpha| \leq \chi(Z_\alpha) \cdot \alpha(Z_\alpha) \leq p_t(s)\cdot M'(Cp_t(s),s,t,r) \leq M(C,s,t,r) $$
and 
$$|X| \leq  \chi(X) \cdot \alpha(X) \leq p_t(s)\cdot M'(Cp_t(s),s,t,r) \leq M(C,s,t,r), $$
as required.
\end{proof}

Finally, we restate and prove \cref{theLemma}, which is another reformulation of
\cref{thm: TI5 generalization size}.
\theLemma*

\begin{proof}
Let $f(Cs)= 2M(Cs,Cs,t,r)$
where $M$ is defined as in \cref{thm: TI5 generalization size}, let $X'$ be obtained by \cref{thm: TI5 generalization size} and let $Z$ be the corresponding set, and let $K\in \set{Y,Z}$ such that $|K|\leq M(C,s,t,r)$.
Let $X=X'\cup K$; then $|X|\leq f(Cs)$. If $K=Y$ then $|N^{r}_{G\sm X}[v]\cap Y|=0$, otherwise, $K=Z$ and therefore $|N^{r}_{G\sm X}[v]\cap Y| \leq \frac{|Y|}{C}$ for every $v\in G\sm X$.
\end{proof}

\section{Coarse Balanced Separators and Tree Decompositions}

In this section  we prove \cref{thm: general coarse bal sep thm}, which we restate for convenience.
\mainthm*

The following lemma immediately implies \cref{thm: general coarse bal sep thm} by setting $\hat{Y}=\mt$. Its proof is similar to that of Lemma 9 in \cite{CH25Coarse}.

\begin{lemma}
    Let $k,s,t,r \in \N$ and let 
     $f=f_{2r,t}$ be the function given by \cref{theLemma}. Let $G\in \class$ be $(k,r)$-separable such that $\omega(G)<s$. Then for every $\hat{Y} \subseteq V(G)$, where 
    $|\hat{Y}| \leq 12f(8ks)$, there exists an internally $( 24f(8ks), r)$-coverable tree decomposition of $G$ with a bag containing $N^{r}_G[\hat{Y}]$.
\end{lemma}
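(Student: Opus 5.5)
We argue by induction on $|V(G)|$, following the standard Robertson--Seymour scheme used to show that balanced separators give tree decompositions. Write $F=f(8ks)$. If $N^r_G[\hat Y]=V(G)$ we take a single bag, which is internally $(|\hat Y|,r)$-coverable since it is the $r$-ball around $\hat Y$ in $G$ itself. We may also enlarge $\hat Y$ so that $|\hat Y|$ is close to $12F$: adding vertices only strengthens the conclusion and keeps the bound $|\hat Y|\le 12F$.

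\textbf{Step 1: shrink the balls meeting $N^r[\hat Y]$.} Apply \cref{theLemma} with radius $2r$, with $C=8k$ and with $Y$ chosen as $\hat Y$ itself. This gives a set $X$ with $|X|\le F$ such that every vertex $v\in G\sm X$ satisfies $|N^{2r}_{G\sm X}[v]\cap \hat Y|\le |\hat Y|/(8k)$. Radius $2r$ is the right choice: if an $r$-ball in $G\sm X$ meets $N^r[\hat Y]$ along paths avoiding $X$, then its centre is within distance $2r$ of the corresponding vertices of $\hat Y$.

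\textbf{Step 2: separate.} Since $G\sm X$ is an induced subgraph of $G$, it admits $(k,r)$-balanced separators. Take the weight function that is $1$ on $\hat Y\sm X$ and $0$ elsewhere, and let $S$ be a balanced separator in $G\sm X$ with an $r$-cover $\hat S$ of size at most $k$. By Step 1, the $k$ balls $N^{2r}[\hat S]$ together contain at most $|\hat Y|/8$ vertices of $\hat Y$. Each component $D$ of $(G\sm X)\sm S$ contains at most half of $\hat Y\sm X$.

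\textbf{Step 3: recurse on the pieces.} For each component $D$, let $G_D$ be $G$ restricted to $D$ together with its attachment region. For the new root set we take $\hat Y_D = (\hat Y\cap D)\cup \hat S\cup X$, adjusted so that $N^r_{G_D}[\hat Y_D]$ contains the whole interface between $D$ and the rest. Its size is at most $\tfrac12|\hat Y| + k + F$. The key arithmetic is that this is at most $12F$ when $|\hat Y|\le 12F$, using $F\ge k$; the slack then allows the bag to have $24F$ centres. The root bag is $B = N^r_G[\hat Y]\cup N^r[\hat S]\cup N^r[X]$, covered by at most $12F+k+F\le 24F$ balls. Apply induction to each $G_D$ with root set $\hat Y_D$, and glue the resulting decompositions to the root bag $B$.

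\textbf{Main obstacle.} The delicate point is making the pieces strictly smaller while ensuring that the separator balls, which live in $G\sm X$, become genuine internal $r$-balls of the bags. Two issues arise. First, an internal cover requires the balls to be taken in $G[\text{bag}]$, so each bag must contain entire balls $N^r_G[v]$ around its chosen centres. Second, the part of $\hat Y$ lying within distance $r$ of $D$ is what must be halved; this is why $C=8k$ and radius $2r$ are used, since they ensure that the separator's $k$ balls cannot absorb much of $\hat Y$ through $X$-avoiding paths.

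I would handle this by defining $G_D=G[N^r_G[D\cup \hat Y_D]]$ and verifying that distances inside this subgraph up to $r$ agree with those in $G$. I would also verify that $\hat Y\cap N^r[D]$, ignoring $X$, is at most $|\hat Y|/2 + |\hat Y|/8$. This bound comes from the balanced separator together with Step 1 applied to vertices on the boundary, and it keeps $\hat Y_D$ below $12F$.
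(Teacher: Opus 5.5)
There is a genuine gap: nothing in your argument makes the pieces $G_D$ proper induced subgraphs of $G$, so the induction on $|V(G)|$ never applies. You name this as the delicate point, but your proposed remedy (checking that distances up to $r$ in $G_D=G[N^r_G[D\cup\hat Y_D]]$ agree with those in $G$) does not address it. In fact, with this choice $G_D$ can equal $G$.

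Take the tree obtained from a long path $x\dd q_1\dd\cdots\dd q_L$ by attaching $|\hat Y|$ pendant leaves to $x$, and let $\hat Y$ be the set of leaves. Then $X=\{x\}$ is a valid output of \cref{theLemma}, and $\hat S=\emptyset$ gives a valid balanced separator for $\hat Y\sm X$ in $G\sm X$. For $D=\{q_1,\dots,q_L\}$ you get $\hat Y_D=\{x\}$ and $G_D=G$. In the classical Robertson--Seymour argument, progress comes from each piece missing a vertex of the root set. Here every piece contains $N^r_G[X]$, which may contain all of $\hat Y$, and you supply no other decreasing quantity.

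Independently, putting $N^r_G[D]$ into $G_D$ breaks the gluing. A vertex of another component $D'$ at distance at most $r$ from $D$ through $S$ lies in $G_D\cap G_{D'}$. In general it is not in $B$, since it is only guaranteed to lie within distance $2r$ of $\hat S$. So the bags containing it are disconnected.

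The missing ingredient, which the paper uses, is a second potential: a minimum $r$-cover $\hat G$ of $G$. If $|\hat G|\le 24F$, a single bag works; this replaces your base case. Otherwise, apply \cref{theLemma} with radius $2r$ and $C=8k$ to both $\hat Y$ and $\hat G$, giving $Z$ with $|Z|\le 2F$. Then take the union of two separators, so that $S=N^r_{G'}[\hat S]$ with $|\hat S|\le 2k$ balances both $\hat Y'$ and $\hat G'$ in $G'=G\sm Z$.

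The pieces are kept lean: $C^\star=G[C\cup S\cup Z\cup N^r_{G'}[\hat Y_S]]$, where $\hat Y_S=\hat Y'\cap N^{2r}_{G'}[\hat S]$ has size at most $|\hat Y|/4$. This interface term is what your count of $|\hat Y_D|$ leaves out, though it still fits under $12F$. Each $C^\star$ is $r$-covered in $G$ by $(C\cap\hat G)\cup\hat S\cup Z\cup\hat G_S\cup\hat Y_S$, which has fewer than $|\hat G|$ vertices. Indeed, if a vertex $v$ of $C$ has its covering centre $u\in\hat G$ outside $C$, the short $(v,u)$-path meets $Z$ or $S$. If it meets $Z$, then $v$ is covered by $Z$. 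If it meets $S$, then $u\in\hat G_S$, and this is exactly where radius $2r$ is needed for $\hat G$. Minimality of $\hat G$ then forces $C^\star\neq G$.

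Keeping whole balls $N^r_G[Z]$ inside every piece, as you do with $X$, is compatible with this potential, since $Z$ is among the covering centres. It is also convenient for ensuring that the child's root bag contains every vertex of $C$ within distance $r$ of $\hat Y$. It is $N^r_G[D]$ that must go.
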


\begin{proof}
    We proceed by induction on $|V(G)|$. Let $\hat{G}$ be a minimum $r$-cover of $G$.
    If $G$ is $(24f(8ks),r)$-coverable, then the tree decomposition of $G$ where $V(G)$ is in a single bag satisfies the statement.  Therefore, we may assume that  $|\hat{G}| > 24f(8ks)$.
    Note that if $|V(G)|\leq 24f(8ks)$, then we immediately have that $G$ is $(24f(8ks),r)$-coverable, which proves the base case of the induction.

    By adding vertices to $\hat{Y}$ if necessary, we may assume that $|\hat{Y}| = 12f(8ks)$. Note that $|\hat{G}| > \frac{|\hat{Y}|}{2}$. 
Let $Z_G$ and $Z_Y$ be the sets obtained by applying \cref{theLemma} to both $\hat{Y}$ and $\hat{G}$ with the parameters $C=8k$ and $r'=2r$. Let $Z = Z_G \cup Z_Y$, $G' = G \sm Z$, $\hat{G}' = \hat{G} \sm Z$, and $\hat{Y}' = \hat{Y} \sm Z$. Then, $|Z|\leq 2f(8ks)$ and for every $v \in V(G')$, we have that 
\begin{equation*}
    \left|N^{2r}_{G'}[v] \cap \hat{Y}' \right| \leq \frac{|\hat{Y}|}{8k} \quad \text{ and } \quad  \left|N^{2r}_{G'}[v] \cap \hat{G}'  \right| \leq \frac{|\hat{G}|}{8k}.
\end{equation*}

Since $G'$ is an induced subgraph of $G$, $G'$ admits $(k,r)$-balanced separators. Therefore, by taking the union of two such separators, there exists $\hat{S}$ such that $|\hat{S}|\leq 2k$ and $S=N_{G'}^r[\hat{S}]$ is a balanced separator of both $\hat{G'}$ and $\hat{Y}'$ in $G'$.
Let $\hat{Y}_S = \hat{Y}'\cap N^{2r}_{G'}[\hat{S}]$ and $\hat{G}_S = \hat{G}'\cap N^{2r}_{G'}[\hat{S}]$. Then 
\begin{equation*}
    |\hat{Y}_S| \leq 2k\cdot \frac{|\hat{Y}|}{8k} = \frac{|\hat{Y}|}{4} 
    \quad \quad \text{ and } \quad \quad |\hat{G}_S| \leq 2k \cdot \frac{|\hat{G}|}{8k} = \frac{|\hat{G}|}{4}.
\end{equation*}

Let $C$ be a connected component of $G' - S =G-(Z\cup S)$. Define 
\begin{align*}
    &C^\star = G \big[C \cup S \cup Z \cup N^{r}_{G'}[\hat{Y}_S] \big] \quad \quad  \text{and}  \quad \quad    \hat{Y}^\star = (C \cap \hat{Y} ) \cup \hat{S} \cup Z \cup \hat{Y}_S.
\end{align*}

Observe that $\hat{Y}^\star \subseteq V(C^\star)$. Next, we verify that $C^\star$ and $\hat{Y}^\star$ are suitable for the induction hypothesis.

\sta{\label{claim: C^star is not all of G} It holds that $|V(C^\star)| < |V(G)|$. }

Since $\hat{G}$ was chosen to be a minimum $r$-cover of $G$, in order to prove \eqref{claim: C^star is not all of G}, it suffices to prove that $C^\star$ can be covered by fewer than $|\hat{G}|$ balls of radius $r$. We check that balls centered on 
\begin{equation*}
    \hat{C}^\star = (C \cap \hat{G} ) \cup \hat{S} \cup Z \cup \hat{G}_S \cup \hat{Y}_S
\end{equation*} have the required property.
 Since $S$ is a balanced separator for $\hat{G}$, we have \begin{align*}
    |\hat{C}^\star| \leq |V(C) \cap \hat{G}| + |\hat{S}| + |Z| + |\hat{G}_S| + |\hat{Y}_S|
    \leq \frac{1}{2}|\hat{G} | +  2k + 2f(8ks) + \frac{|\hat{G}|}{4}  +\frac{|\hat{Y}|}{4}  
    <  |\hat{G}|.
\end{align*}
In the last inequality, we used that $f(x)\geq x$.

Now it remains to check that  $V(C^\star) \subseteq N^{r}_G[\hat{C}^\star]$. See Figure~\ref{fig: Balanced Separator Visualization} for a visualization. 
Suppose for a contradiction that $v \in C^\star$ but $v\notin N^{r}_G[\hat{C}^\star]$. If $v\in S \cup Z\cup N_{G'}^r[\hat{Y}_S]$, then evidently $v\in  N^r_G[C^\star]$, and so we may assume $v \in C$.
Since $G \subseteq N^r_G[\hat{G}]$, there exists  $u\in \hat{G}$ such that $\dist_G(u,v) \leq r$. If $u \in C$, then we are done, so we may assume $u \notin C$. 
 Since $C$ is a component of $G - (Z \cup S)$, there is a $(v,u)$-path $P_{v,u}$ of length at most $r$ intersecting $S \cup Z$. If there exists a vertex $x \in P_{v,u} \cap Z$, then $\dist(v, x) \leq r$ and so $v \in N^r_G[Z]$, which is a contradiction. Therefore, $P_{v,u}\cap S$ is non-empty. This implies that $\dist_{G'}(u,\hat{S}) \leq \dist_{G'}(u,S) + r \leq 2r$ and we deduce that $u \in \hat{G}_S$, and so $v \in N^r_G[\hat{G}_S]$.
This contradiction proves \eqref{claim: C^star is not all of G}.
\\

\begin{figure}[ht]
        \centering
        \scalebox{0.9}{\begin{tikzpicture}
	\begin{pgfonlayer}{nodelayer}
		\node [style=vertex] (0) at (-1.75, 5) {};
		\node [style=vertex] (1) at (-0.25, 5) {};
		\node [style=vertex] (2) at (1.5, 5) {};
		\node [style=none] (3) at (-3.25, 1) {};
		\node [style=none] (4) at (-1.75, 1) {};
		\node [style=none] (5) at (-0.75, 1) {};
		\node [style=none] (6) at (0.75, 1) {};
		\node [style=none] (7) at (1.75, 1) {};
		\node [style=none] (8) at (3.25, 1) {};
		\node [style=none] (10) at (-4.75, 0.75) {};
		\node [style=none] (11) at (5, 0.75) {};
		\node [style=none] (12) at (2.75, 5.5) {};
		\node [style=none] (13) at (-2.5, 4.5) {};
		\node [style=none] (14) at (2.75, 4.5) {};
		\node [style=none] (15) at (-3.75, 5.5) {};
		\node [style=none] (16) at (4.25, 5.5) {};
		\node [style=none] (17) at (-2.5, 5.5) {};
		\node [style=none] (19) at (2.45, 5.025) {$\hat{S}$};
		\node [style=none] (21) at (0.25, 6.75) {$S = N^r_{G'}[\hat{S}]$};
		\node [style=none] (22) at (3, -1) {};
		\node [style=none] (23) at (6, -1) {};
		\node [style=none] (24) at (3, -7) {};
		\node [style=none] (25) at (8, -7) {};
		\node [style=none] (26) at (-3, -1) {};
		\node [style=none] (27) at (-6, -1) {};
		\node [style=none] (28) at (-3, -7) {};
		\node [style=none] (29) at (-8, -7) {};
		\node [style=none] (31) at (9, -7) {$C_2$};
		\node [style=none] (34) at (-9, -7) {$C_1$};
		\node [style=vertex set] (35) at (1.75, 1.5) {};
		\node [style=vertex set] (36) at (-1.75, 3) {};
		\node [style=vertex set] (37) at (-3.075, 1.75) {};
		\node [style=vertex set] (39) at (-5.5, -1.5) {};
		\node [style=vertex set] (40) at (-4.75, -2.75) {};
		\node [style=vertex set] (41) at (3.75, -1.75) {};
		\node [style=vertex set] (43) at (5, -2.5) {};
		\node [style=vertex set] (44) at (5.25, -1.25) {};
		\node [style=none] (45) at (-5, 6) {};
		\node [style=none] (47) at (7.25, -2.25) {};
		\node [style=none] (48) at (5.5, 6) {};
		\node [style=none] (49) at (-7.25, -2.25) {};
		\node [style=none] (50) at (8, 3.25) {$N^{2r}_{G'}[\hat{S}]$};
		\node [style=vertex set] (51) at (0.45, 2.25) {};
		\node [style=vertex set] (52) at (2.15, 3.5) {};
		\node [style=vertex set] (53) at (-6, -4.5) {};
		\node [style=vertex set] (54) at (-5.25, -7) {};
		\node [style=vertex set] (55) at (-4.25, -5.25) {};
		\node [style=vertex set] (56) at (4.25, -5.5) {};
		\node [style=vertex set] (57) at (5.5, -7.25) {};
		\node [style=vertex set] (58) at (6, -4.5) {};
		\node [style=none] (59) at (-0.75, -3.5) {};
		\node [style=none] (60) at (1.75, -3.5) {};
		\node [style=none] (61) at (1.75, -3.5) {};
		\node [style=none] (62) at (0.5, -3.5) {};
		\node [style=none] (63) at (0.5, -4.25) {$Z$};
		\node [style=vertex] (65) at (-4, -1.5) {};
		\node [style=none] (67) at (-4, -2) {$v$};
		\node [style=vertex] (68) at (0.75, 1) {};
		\node [style=none] (69) at (1.25, 1) {$x$};
		\node [style=none] (70) at (4, -2.25) {$u$};
		\node [style=none] (71) at (6.5, -4.25) {};
		\node [style=none] (72) at (8.5, -3.5) {};
		\node [style=none] (73) at (10.75, -3.25) {vertices in $\hat{G}$};
	\end{pgfonlayer}
	\begin{pgfonlayer}{edgelayer}
		\draw [style=grey fill] (61.center)
			 to [bend left=90, looseness=2.50] (59.center)
			 to [bend left=90, looseness=2.50] cycle;
		\draw [style=simple] (0) to (3.center);
		\draw [style=simple] (1) to (5.center);
		\draw [style=simple] (1) to (6.center);
		\draw [style=simple] (2) to (7.center);
		\draw [style=simple] (8.center) to (2);
		\draw [style=simple, bend right=90] (5.center) to (6.center);
		\draw [style=simple, bend right=90] (3.center) to (4.center);
		\draw [style=simple, bend right=90] (7.center) to (8.center);
		\draw [style=grey fill] (17.center)
			 to [bend right=90, looseness=1.50] (13.center)
			 to (14.center)
			 to [bend right=90, looseness=1.50] (12.center)
			 to cycle;
		\draw [style=grey fill] (16.center)
			 to [bend right=90, looseness=0.25] (15.center)
			 to (10.center)
			 to [bend right=90, looseness=0.25] (11.center)
			 to cycle;
		\draw [style=grey fill] (22.center)
			 to (24.center)
			 to [bend right=105] (25.center)
			 to (23.center)
			 to [bend left=270, looseness=0.50] cycle;
		\draw [style=grey fill] (27.center)
			 to [bend right=270, looseness=0.50] (26.center)
			 to (28.center)
			 to [bend left=105] (29.center)
			 to cycle;
		\draw [style=gray edge] (49.center) to (45.center);
		\draw [style=gray edge, in=105, out=75, looseness=0.50] (45.center) to (48.center);
		\draw [style=gray edge] (48.center) to (47.center);
		\draw [style=gray edge, bend left=105, looseness=0.25] (47.center) to (49.center);
		\draw [style=simple] (68) to (65);
		\draw [style=simple] (68) to (41);
		\draw [style=diredge] (72.center) to (71.center);
		\draw (0) to (4.center);
	\end{pgfonlayer}
\end{tikzpicture}}
        \caption{Visualization of the proof of \eqref{claim: C^star is not all of G}. The vertices in $\hat{G}$ are distinguished by white fill and black outline.}
        \label{fig: Balanced Separator Visualization}
        \end{figure}
%
%
%
%
Since $S$ is a balanced separator for $\hat{Y}$, we can bound $|\hat{Y}^\star|$ as follows: 
\begin{align*}
    |\hat{Y} ^\star| &\leq |V(C) \cap \hat{Y}| + |\hat{S}| + |Z| + |\hat{Y}_S|\\
    &\leq \frac{1}{2}|\hat{Y} | +  2k + 2f(8ks) + \frac{|\hat{Y}|}{4}\\ 
    &< \frac{3|\hat{Y}|}{4} + 3f(8ks) \leq  12f(8ks) =|\hat{Y} |.
\end{align*}
%
%
Now suppose that $C_1, \dots, C_m$ are the components of $G -(Z\cup S)$. 
 By the inductive hypothesis, for every $i \in [m]$, the graph $C_i^\star$ admits a tree-decomposition $\cT_i=(T_i, \beta_i)$ such that every bag is internally $(24f(8ks),r)$-coverable and there exists a bag $\beta_i(t_i)$ that contains $N^{r}_{C^\star_i}[\hat{Y}^\star_i]$.

Let $T$ be obtained from the disjoint union of $T_1, \dots, T_m$ by adding a new vertex $t_0$ that is adjacent to $t_1, \dots, t_m$. Define $\beta(t) = \beta_i(t)$ for all $t\in T_i$ and $i\in [m]$, and set $\beta(t_0) = N^{r}_G[\hat{Y}] \cup S\cup Z$. Then $\cT=(T,\beta)$ is a tree decomposition of $G$. 
To see this, note that for any $i<j \in [m]$ it holds that $C_i^\star \cap C_j^\star \subseteq N_G^r[\hat{Y}_S] \cup S\cup Z$, which is contained in the bag $\beta(t_0)$.



We observe that the set $N^r_G[\hat{Y}] \cup S \cup Z$ can be $r$-covered by the set $\hat{Y} \cup \hat{S} \cup Z$, which has at most $12f(8ks) + 2k + 4f(8ks) \leq 24f(8ks)$ vertices. Since the other bags do not change, every bag of $\cT$ is internally $(24f(8ks), r)$-coverable and $\cT$ has a bag containing $N_G^r[\hat{Y}]$, as required.
\end{proof}

\section{Acknowledgments}
We are grateful to Daniel Lokshtanov and Ajaykrishnan E. S. for many inspiring discussions and to Alex Divoux for proofreading parts of this paper.

\bibliographystyle{plain}
\bibliography{references}
\end{document}